\numberwithin{equation}{section}
{                     
{                     
{                       
{                       

\theoremstyle{plain}
\newtheorem{thm}{Theorem}[section]

\newtheorem{lem}[thm]{Lemma}

\parindent0cm
\newcommand{\enter}{\bigskip}

\date{}
%
%
%
%

\begin{document}
 \author{{Prasanta Kumar Barik and Ankik Kumar Giri}\vspace{.2cm}\footnote{Corresponding author. Tel +91-1332-284818 (O);  Fax: +91-1332-273560  \newline{\it{${}$ \hspace{.3cm} Email address: }}ankikgiri.fma@iitr.ac.}\\
\footnotesize  \small{ \textit{Department of Mathematics, Indian Institute of Technology Roorkee,  Roorkee-247667, Uttarakhand,}}\\ \small{\textit{India}}
  }

\title{The continuous coagulation and nonlinear multiple fragmentation equation}
\date{January 10, 2018}

\maketitle


\begin{quote}
{\small {\em\bf Abstract} The present paper deals with the existence and uniqueness of global classical solutions to the continuous coagulation and nonlinear multiple fragmentation equation for large classes of unbounded coagulation, collision and breakup kernels. In addition, it is shown that solutions are mass conserving. The coagulation and breakup kernels may have singularities on both the co-ordinate axes whereas the collision kernel grows up to bilinearity.  \enter
}
\end{quote}
{\rm \bf MSC 2010:} Primary: 45K05; 47J35; Secondary: 34K30; 34G20;\\

{\bf Keywords:}  Coagulation; Nonlinear multiple fragmentation; Collision kernel; Global solution; Existence; Uniqueness; Mass conservation.


\section{Introduction}\label{existintroduction1}

We study the general continuous coagulation and nonlinear multiple fragmentation equation which describes the evolution of the size distribution function
 $g=g(x,t) \geq 0$ of particles of mass $x \in \mathds{R}_{>0}:= (0, \infty)$ at time $t \geq 0$ and reads \cite{Camejo:2014I, Kostoglou:2000, Barik:2017III, Barik:2017II}
\begin{align}\label{ccfe}
\frac {\partial g(x,t)}{\partial t} =&\frac{1}{2} \int_0^{x}K(x-y,y)g(x-y,t)g(y,t)dy - \int_{0}^{\infty} K(x,y)g(x,t)g(y,t)dy\nonumber\\
&+\int_{x}^{\infty}\int_{0}^{\infty}B(x|y;z)C(y,z)g(y,t)g(z,t)dzdy-\int_{0}^{\infty} C(x,y)g(x,t)g(y,t)dy
\end{align}
with
\begin{align}\label{in1}
g(x,0) = g(x, 0)\geq 0\  \mbox{a.e.}.
\end{align}
The multiple fragmentation process considered in (\ref{ccfe}) is nonlinear which occurs due to the collision between pairs of particles. More details on the nonlinear fragmentation can be obtained in \cite{Cheng:1990, Cheng:1988, Matthieu:2007, Kostoglou:2000, Laurencot:2001}.\\

In equation (\ref{ccfe}), $K$ and $C$ describe the coagulation and collision kernels, respectively, which are both nonnegative and symmetric functions $K(x, y) = K(y, x)$  and $C(x, y) = C(y, x)$ for all $(x, y) \in \mathds{R}_{>0} \times \mathds{R}_{>0} $. The coagulation kernel $K(x, y)$ denotes the rate at which two particles with respective masses $x$ and $y$ merge to form particles of mass $x+y$ whereas $C(x, y)$  represents the rate of collision between particles of masses $x$ and $y$. In general, the collision kernels have similar properties like coagulation kernels, which are described in \cite{Barik:2017I, Camejo:2014I, Giri:2012, Stewart:1989}. Next, the breakup kernel $B(x|y;z)$ is the daughter distribution function describing the probability that the fragmentation of a particle with mass $y$ produces a particle with mass $x \in (0, y)$ after the collision with particles of mass $z$. The mass conservation property during the fragmentation events demands
\begin{align}\label{mass1}
\int_{0}^{y}xB(x|y;z)dx = y\ \ \text{for all}\ \ y\in \mathds{R}_{>0}.
\end{align}
In addition, it is important to mention that the total number of fragments $N(y)$ resulting from the breakage of a particle of mass $y > 0$ can be defined as
\begin{align}\label{N1}
N(y):= \int_{0}^{y}B(x|y;z)dx< \infty, \ \ \text{for all}\ \ y>0,\ \ \ \ \ B(x|y;z)=0\ \ \text{for}\ \ x> y,
\end{align}
where $\sup_{y \in \mathds{R}_{>0}}N(y) =N$. The breakup kernel has similar behaviour like breakage function in linear fragmentation equation.\\

In the right-hand side of (\ref{ccfe}), the first term represents the appearance of particles of mass $x$ due to binary coalescence of smaller ones and the second one exhibits the disappearance of particles of mass $x$ due to coagulation events. The third term describes the creation of particles mass $x$ due to the collision of particles of masses $y$ and $z$ while the fourth term accounts the depletion of particles of mass $x$ due to the collision with other particles during the nonlinear fragmentation events.\\

Moreover, throughout this paper, we need to have some information on the time evolution of moments of solutions to the coagulation and nonlinear fragmentation equation (\ref{ccfe}). For a function $g : \mathds{R}_{>0} \rightarrow \mathds{R} $, we define the following explicit form of moments as

\begin{eqnarray}\label{moment}
\mathcal{M}_\xi (t) := \int_0^{\infty} x^{\xi} g(x, t)dx,\ \ \text{for}\ \xi \in \mathds{R}.
\end{eqnarray}
 The zeroth $(\xi =0)$ and first $(\xi =1)$ moments, $\mathcal{M}_0 (t)$ and $\mathcal{M}_1 (t)$, respectively, represent the total number of particles and total mass of particles. In coagulation events, the zeroth moment $\mathcal{M}_0 (t)$ is a  decreasing function while in fragmentation process, it is an increasing function. However, $\mathcal{M}_1 (t)$ may or may not be constant during coagulation and nonlinear fragmentation processes that depends on the nature of coagulation and collision rates. Negative moments are also important to mention here for including a few very interesting singular coagulation kernels (for e.g. Smoluchowski kernel for Brownian motion and Granulation kernel) in the theory of existence and uniqueness of solutions to (\ref{ccfe})--(\ref{in1}). These moments have been considered in many articles, see \cite{Canizo:2011, Escobedo:2004, Escobedo:2006, Fournier:2005, Norris:1999}.\\

The main novelty of this article is to show the existence, uniqueness and mass-conserving property of global solutions to (\ref{ccfe})--(\ref{in1}) for singular coagulation and breakup kernels alongwith the quadratic growth on collision kernels. The existence and uniqueness of solutions to the continuous coagulation with linear fragmentation processes have been discussed in many articles by using various techniques for different growth conditions on coagulation and fragmentation rates, see \cite{Banasiak:2012I, Banasiak:2012II, Banasiak:2011, Banasiak:2013, Camejo:2014II, Escobedo:2004, Giri:2013, Lamb:2004, McLaughlin:1997I, McLaughlin:1997II, Stewart:1989, Stewart:1990}. However, the nonlinear fragmentation equation has not been extensively addressed. Though there are a few articles in which analytical, scaling solutions and their asymptotic behaviour have been investigated, see \cite{Cheng:1990, Cheng:1988, Matthieu:2007, Jianhong:2008, Kostoglou:2000}. In \cite{Laurencot:2001}, authors have discussed the existence and uniqueness of weak solutions to the discrete non-linear fragmentation equation by using Weak $L^1$ compactness method. In addition, they have also investigated the asymptotic behaviour, gelation and mass conservation property of solutions. Later, in \cite{Jianhong:2008}, the analytical solutions to the discrete coagulation and nonlinear binary fragmentation equation have been studied. Recently, in \cite{Barik:2017III, Barik:2017II}, authors have discussed the existence and uniqueness of weak solutions to (\ref{ccfe})--(\ref{in1}) by taking both non-singular and singular coagulation kernels. However, the classes of collision kernels considered in \cite{Barik:2017III, Barik:2017II} are smaller than the one taken in the present work. Moreover, in the present work, we look for global classical solutions to (\ref{ccfe})--(\ref{in1}). Several researchers have already discussed the existence of global classical solutions to the coagulation and linear fragmentation equations through various techniques, see \cite{Banasiak:2012I, Banasiak:2012II, Banasiak:2011, Banasiak:2013, Dubovskii:1996, Galkin:1986, Lamb:2004, McLaughlin:1997I, McLaughlin:1997II, Saha:2014}. In \cite{Banasiak:2012I, Banasiak:2012II, Banasiak:2011, Banasiak:2013, Lamb:2004, McLaughlin:1997I, McLaughlin:1997II}, authors have discussed the existence of global classical solutions for the coagulation and linear fragmentation equations by using semigroup technique whereas in \cite{Dubovskii:1996, Galkin:1986, Saha:2014}, a different approach, introduced by Galkin and Dubovskii, is used to show the existence of global classical solutions which relies on a compactness argument. In \cite{Galkin:1986}, they have discussed the existence and uniqueness of global solutions to pure coagulation equation by taking an account of unbounded coagulation kernels. Later, in 1996, Dubovskii and Stewart \cite{Dubovskii:1996} have extended this result for the continuous coagulation and linear binary fragmentation equations with unbounded coagulation and fragmentation rates. Recently, Saha and Kumar \cite{Saha:2014} have extended the work of Dubovskii and Stewart \cite{Dubovskii:1996} for coagulation and linear multiple fragmentation equation by including singular coagulation rates. Best to our knowledge, this is the first attempt to address the existence, uniqueness and mass-conservation of global solutions for the continuous coagulation and nonlinear multiple fragmentation equation (\ref{ccfe})--(\ref{in1}). The main motivation of this work comes from \cite{Barik:2017III, Barik:2017II, Dubovskii:1996, Saha:2014}.\\

Let us workout the paper as per the following plans: In the next section, we state some function spaces and assumptions on coagulation, collision and breakup rates. In addition, the existence, uniqueness and mass-conservation results are also stated.  In Section $3$, the existence of global solutions is established by using compactness argument. Further, the mass-conserving property of solutions is also shown at the end of this section. Finally, the uniqueness result is investigated in the last section.

\section{Function spaces and Assumptions}
Fix $T>0$ and let us define the following abstract spaces. Let $\Xi$ be the strip defined as
\begin{eqnarray*}
\Xi := \{(x,t): x\in \mathbb{R}_{>0},\   0 \leq t \leq T \}
\end{eqnarray*}
and $\Xi(\lambda_1, \lambda_2; T)$ denotes the following closed rectangle
\begin{eqnarray*}
\Xi(\lambda_1, \lambda_2; T) := \{(x,t): x\in [\lambda_1, \lambda_2], 0 \leq t \leq T \},
\end{eqnarray*}
where $0< \lambda_1 \leq x \leq \lambda_2$. Let $\Lambda_{\sigma_1, \sigma_2}(T)$ be the space of all continuous functions $g$ with bounded norms defined by
\begin{eqnarray*}
\|g\|_{\sigma_1, \sigma_2}:=\sup_{0 \leq t \leq T} \int_0^{\infty} \bigg(x^{\sigma_1}+\frac{1}{x^{\sigma_2}} \bigg)|g(x,t)|dx,\ \  \sigma_1\geq 1\ \text{and}\ 0<\sigma_2<1
\end{eqnarray*}
and
\begin{eqnarray*}
\Lambda^{+}_{\sigma_1, \sigma_2}(T):= \{ g \in \Lambda_{\sigma_1, \sigma_2}(T): g \geq 0\ \text{a.e.}\ \}
\end{eqnarray*}
which is the positive cone of $\Lambda_{\sigma_1, \sigma_2}(T)$.\\

 Let us admit the following assumptions on coagulation, collision and breakup kernels which are required further to show the existence of solutions to  (\ref{ccfe})--(\ref{in1}).\\

\textbf{Assumptions}: $(A0)$ Let $K(x, y)$ and $C(x, y)$ be non-negative and continuous for all $(x, y) \in \mathds{R}_{>0} \times \mathds{R}_{>0} $ and $B(x|y;z)$ also non-negative and continuous for all $(x, y, z) \in \mathds{R}_{>0} \times \mathds{R}_{>0} \times \mathds{R}_{>0}$.\\
\\
$(A1)$ $K(x,y)\leq k_1 \frac{ (1+x+y)^{\mu}}{(xy)^{\sigma}}$ for all $(x, y)\in \mathbb{R}_{>0} \times \mathbb{R}_{>0}$, $0 \leq \mu-\sigma  \leq  1 $, $\sigma \in [0, 1)$ and some constant $k_1>0$,\\
\\
$(A2)$ $C(x, y)\leq k_2(1+x)^{\alpha}(1+y)^{\alpha}$ for all $(x, y)\in \mathbb{R}_{>0} \times \mathbb{R}_{>0}$,\ $0 \leq \alpha \leq 1$ and for some constant $k_2 \geq 0$. In addition,  $K(x, y)$ and $C(x, y)$ are related in the unit square by the following condition:
\begin{eqnarray*}
K(x, y) \geq 2(N-1)C(x, y),\ \ \ \forall (x, y) \in (0, 1) \times (0, 1),
\end{eqnarray*}
\\
$(A3)$ $B(x|y;z)\leq \tilde{B}\frac{1}{x}$, for $ x \in (0, y)$ and $\tilde{B}>0$,\\
\\
$(A4)$ for $p > 1$ and there are some positive constants $\omega_p <1$ such that
\begin{align*}
\int_{0}^{y}x^p B(x|y;z)dx \leq \omega_p y^p,
\end{align*}
\\
$(A5)$ for $ \omega \in [0, 1)$ and there are some positive constants $\eta(\omega) >1$ such that
\begin{align*}
\int_{0}^{y}x^{-\omega} B(x|y;z)dx \leq \eta(\omega) y^{-\omega}.
\end{align*}
Note that, in $(A5)$ for $\omega=0$, we have $\eta(\omega) \geq \sup_{y \in \mathds{R}_{>0}} N(y)=N$.

Let us consider an example of breakup kernel as $B(x|y;z)=\frac{\nu+2}{y} \bigg( \frac{x}{y}\bigg)^{\nu}$, where $ \nu \in (-1, 0]$ which clearly satisfies $(A3)$-- $(A5)$. Now, let us end up this section by stating the following theorems on existence, mass-conservation and uniqueness of solutions to (\ref{ccfe})--(\ref{in1}).

\begin{thm}\label{Existence Thm}
Suppose $(A0)$--$(A5)$ hold.  Let the initial value $g(x, 0) \in \Lambda^+_{\sigma_1, \sigma_2}(0)$. Then (\ref{ccfe})--(\ref{in1}) have a solution in $\Lambda^+_{\sigma_1, \sigma_2}(T)$.
\end{thm}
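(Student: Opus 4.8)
The plan is to follow the compactness approach of Galkin and Dubovskii as developed in \cite{Dubovskii:1996, Saha:2014}, adapted to accommodate the nonlinear (collision-induced) fragmentation term and the singularities of the kernels. First I would replace the problem on the full half-line by a family of truncated problems in which the coagulation, collision and breakup kernels are cut off so as to become bounded and supported away from the coordinate axes; concretely, for each integer $n$ one keeps the original kernels on the rectangle $[1/n, n] \times [1/n, n]$ and sets them to zero elsewhere, restricting the mass variable accordingly. Because the truncated kernels are bounded and continuous, the integral operator on the right-hand side of (\ref{ccfe}) is locally Lipschitz on the Banach space of continuous functions equipped with $\|\cdot\|_{\sigma_1,\sigma_2}$, so a Banach fixed-point (Picard) argument yields a unique continuous local-in-time solution $g_n$. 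Writing the equation in the mild (Duhamel) form obtained by treating the linear-in-$g(x,t)$ loss terms as an integrating factor, and using the non-negativity of the gain terms, shows $g_n \geq 0$; the a priori bounds below then prevent blow-up and let one continue $g_n$ to all of $[0,T]$.

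The core of the argument is a set of uniform-in-$n$ a priori estimates on $\|g_n\|_{\sigma_1,\sigma_2}$. To bound the high-order part $\int_0^\infty x^{\sigma_1} g_n \, dx$ I would multiply the truncated equation by $x^{\sigma_1}$ and integrate: the coagulation contribution is controlled by the moment itself through the superadditivity of $x \mapsto x^{\sigma_1}$ (for $\sigma_1 \geq 1$) together with the growth restriction $0 \leq \mu-\sigma \leq 1$ in (A1), while the collision-fragmentation contribution is kept under control by the strict contraction $\omega_{\sigma_1} < 1$ of (A4) and the at most bilinear growth $\alpha \leq 1$ of (A2); together these give a Gronwall-type differential inequality with a bound independent of $n$. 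The singular, low-order part $\int_0^\infty x^{-\sigma_2} g_n \, dx$ is the delicate one: the nearly $1/x$ singularity of the breakup kernel (A3) means that collisions manufacture very many small particles, which threatens to blow up the negative moment. Here the hypothesis (A5) (with $\eta(\omega)>1$) and, crucially, the pointwise domination $K(x,y) \geq 2(N-1)C(x,y)$ on the unit square are used to show that the depletion of small particles by coagulation outweighs their creation by fragmentation, again yielding a closed differential inequality and hence a uniform bound on the negative moment.

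With the uniform norm bounds in hand, I would next establish compactness of $\{g_n\}$ on each compact rectangle $\Xi(\lambda_1,\lambda_2;T)$. On such a rectangle the weights $x^{\sigma_1}$ and $x^{-\sigma_2}$ are bounded above and below, so the moment bounds translate into uniform integral control of $g_n$; feeding these bounds back through the Duhamel representation gives uniform pointwise bounds on $g_n$, uniform bounds on $\partial_t g_n$, and a uniform modulus of continuity in $x$ on the rectangle. The family $\{g_n\}$ is therefore equibounded and equicontinuous, and an Arzel\`a--Ascoli argument combined with a diagonal extraction over an exhausting sequence of rectangles produces a subsequence, still denoted $g_n$, converging uniformly on compact subsets of the strip $\Xi$ to a non-negative continuous limit $g$.

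Finally I would pass to the limit $n \to \infty$ in the integral (Duhamel) form of the truncated equations. The uniform positive- and negative-moment bounds provide the uniform integrability needed to dominate the tails near $0$ and $\infty$ and hence to interchange the limit with the integrals defining the coagulation, collision and breakup terms, so that $g$ satisfies (\ref{ccfe})--(\ref{in1}); the uniform norm bound passes to the limit by Fatou's lemma, giving $g \in \Lambda^+_{\sigma_1,\sigma_2}(T)$. The main obstacle throughout is the uniform control of the negative moment in the presence of the singular nonlinear fragmentation, which is precisely why the structural coupling $K \geq 2(N-1)C$ on the unit square and the fragmentation moment bounds (A4)--(A5) are imposed; the second most delicate point is extracting enough equicontinuity in $x$ from the singular, unbounded kernels to justify the Arzel\`a--Ascoli compactness.
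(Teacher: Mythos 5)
Your skeleton is exactly the paper's: truncate the kernels to compact support on $[1/n,n]^2$, invoke (or reprove) well-posedness of the truncated problems, derive uniform-in-$n$ moment bounds, get equiboundedness and equicontinuity in $t$ and $x$ on each rectangle $\Xi(\lambda_1,\lambda_2;T)$, apply Arzel\`a--Ascoli with a diagonal extraction, and pass to the limit in the integral form with tail estimates furnished by the moment bounds. Two points in your plan, however, do not survive literal execution. First, you attach the unit-square condition $K \geq 2(N-1)C$ to the \emph{negative}-moment estimate, claiming coagulation depletion of small particles cancels the fragmentation gain there. In fact no such cancellation is available at the level of $\mathcal{M}_{-\omega}$: the fragmentation gain is $(\eta(\omega)-1)\int\int y^{-\omega}C\,g\,g$ while the coagulation bracket $(x+y)^{-\omega}-x^{-\omega}-y^{-\omega}\leq -\max(x,y)^{-\omega}$, and the ratio $y^{-\omega}/\max(x,y)^{-\omega}$ is unbounded as $y\to 0$ on the unit square, so no constant multiple of $C$ under $K$ closes that estimate. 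The paper uses the domination condition instead for the \emph{zeroth} moment: without it, $(A2)$ with $\alpha\leq 1$ only yields $\frac{d}{dt}\mathcal{M}_{0,n}\lesssim (N-1)(\mathcal{M}_{0,n}+\mathcal{M}_{1,n})^2$, a Riccati inequality whose bound blows up in finite time; the condition absorbs the quadratic-in-$\mathcal{M}_0$ part on $(0,1)^2$ and leaves a linear Gronwall inequality with coefficient $\mathcal{P}_1$. The negative moment is then the easy one: the coagulation contribution is automatically non-positive (no structural condition needed), and $(A5)$ plus $\mathcal{P}_0(T)$, $\mathcal{P}_1$ close a linear Gronwall argument. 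So your estimates must be ordered $\mathcal{M}_1 \to \mathcal{M}_0 \to \mathcal{M}_{-\omega}$; trying to bypass $\mathcal{M}_0$ via $\mathcal{M}_0\leq \mathcal{M}_1+\mathcal{M}_{-\sigma_2}$ reintroduces a quadratic inequality for $\mathcal{M}_{-\omega}$ and fails globally.

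Second, the step ``feeding these bounds back through the Duhamel representation gives uniform pointwise bounds'' is a genuine gap as stated: moment (i.e.\ $L^1$-weighted) bounds do not control the sup-norm, and the coagulation gain at mass $x\in[\lambda_1,\lambda_2]$ is a convolution $\int_0^x K(x-y,y)g_n(x-y)g_n(y)\,dy$ that reaches masses arbitrarily close to $0$, outside the rectangle, where no pointwise bound is available and $g_n$ may be large; moreover the quadratic structure means a naive sup-norm Gronwall does not close. The paper handles this with the Galkin--Dubovskii device you allude to via the references: setting $c_n = g_n/x^{\sigma}$, one obtains a closed convolution integral inequality for $c_n$, constructs an explicit majorant $E(x,t)$ solving the corresponding convolution integral equation (computed by Laplace transform in $x$), and proves $c_n \leq E$ by a contradiction argument at a first touching point. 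You should make this comparison-function argument explicit rather than appeal to Duhamel. With those two repairs -- both available within the framework you cite -- the rest of your outline (time equicontinuity from the moment and sup bounds, space equicontinuity from equicontinuity of the truncated kernels together with splitting off the small- and large-mass regions controlled by $\mathcal{P}_{-\omega}(T)$ and $\mathcal{P}_2(T)$, then the diagonal limit passage) matches the paper's proof.
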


The mass-conservation property of solutions to (\ref{ccfe})--(\ref{in1}) is shown in the following theorem.

\begin{thm}\label{MassThm}
Suppose $g \in \Lambda_{\sigma_1, \sigma_2}^{+}(T)$ is a solution to (\ref{ccfe})--(\ref{in1}) with $g(x, 0) \in \Lambda^+_{\sigma_1, \sigma_2}(0)$, where $\sigma_1 \geq 2$. Assume $(A0)$--$(A5)$ hold good. Then, $g$ is a mass conserving solution.
\end{thm}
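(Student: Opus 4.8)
The plan is to show that the first moment $\mathcal{M}_1(t)=\int_0^\infty x\,g(x,t)\dd x$ is constant in time by proving that $\frac{d}{dt}\mathcal{M}_1(t)=0$. Formally, multiplying the right-hand side of (\ref{ccfe}) by $x$ and integrating over $x\in(0,\infty)$, the two coagulation terms cancel after the standard symmetrization substitution (writing $x=(x-y)+y$ in the gain term), and the two collision/fragmentation terms cancel by virtue of the mass-conservation identity (\ref{mass1}), namely $\int_0^y xB(x|y;z)\dd x=y$. So the formal computation is immediate; the entire difficulty is justifying the manipulations rigorously for the weak solution $g\in\Lambda^+_{\sigma_1,\sigma_2}(T)$.

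First I would fix a truncation level and integrate against the test weight $x\,\mathds{1}_{(0,R)}(x)$ rather than $x$ itself, so that every integral is finite and Fubini--Tonelli applies. Using the symmetry $K(x,y)=K(y,x)$ and the change of variables on the coagulation gain term, one rewrites the coagulation contribution to $\frac{d}{dt}\int_0^R x\,g\dd x$ as a double integral over the region where $x-y$ and $y$ are positive but $x=(x-y)+y$ may exceed $R$; the portion that fails to cancel is controlled by masses leaving the truncated interval. Similarly, for the fragmentation terms I would use (\ref{mass1}) together with the fact that $B(x|y;z)=0$ for $x>y$ (from (\ref{N1})) so that the daughter-mass integral reproduces exactly the mass $y$ of the parent, again up to a truncation remainder supported on large masses.

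The key step, and the main obstacle, will be showing that all these truncation remainders vanish as $R\to\infty$, which requires integrable control of the mass flux into high masses. This is precisely where the hypothesis $\sigma_1\geq 2$ is used: because $g\in\Lambda^+_{\sigma_1,\sigma_2}(T)$ with $\sigma_1\geq2$, the second moment $\mathcal{M}_2(t)=\int_0^\infty x^2 g\dd x$ is finite and bounded uniformly on $[0,T]$, giving a uniform tail bound $\int_R^\infty x\,g(x,t)\dd x\leq R^{-1}\mathcal{M}_2(t)\to0$. Combined with the growth bounds $(A1)$ on $K$, $(A2)$ on $C$ (at most quadratic), and the moment estimate $(A4)$ on $B$, these tail estimates let me bound each remainder term by a constant times a negative power of $R$ times the controlling moments, so that each remainder tends to zero uniformly in $t$.

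Once the remainders are shown to vanish, passing to the limit $R\to\infty$ yields $\frac{d}{dt}\mathcal{M}_1(t)=0$ for a.e.\ $t\in[0,T]$, and hence $\mathcal{M}_1(t)=\mathcal{M}_1(0)$ for all $t\in[0,T]$, which is the asserted mass conservation. I expect the estimates to lean heavily on splitting each double integral into the region $\{x+y\leq R\}$ (where cancellation is exact) and its complement (the remainder), and on Tonelli's theorem to interchange the time derivative, or rather the time integral of the weak formulation, with the spatial integration; care must be taken to work with the integrated-in-time (weak) form of (\ref{ccfe}) throughout rather than differentiating pointwise in $t$.
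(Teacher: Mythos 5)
Your proposal is correct and follows essentially the same route as the paper: multiply (\ref{ccfe}) by $x$, cancel the two coagulation terms via symmetrization and Fubini, cancel the collision--fragmentation pair via (\ref{mass1}), and use $\sigma_1\geq 2$ together with $(A1)$--$(A2)$ to secure the required integrability. The only difference is presentational: the paper performs the cancellations directly on the untruncated integrals, with Fubini justified by the finiteness of the moments encoded in the $\Lambda^{+}_{\sigma_1,\sigma_2}(T)$ norm, whereas your cutoff at $R$ with remainders controlled by the tail bound $\int_R^\infty x\,g(x,t)\,dx\leq R^{-1}\mathcal{M}_2(t)$ is a more explicit rendering of the same justification.
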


In order to investigate the uniqueness of solutions, we need further the following additional restrictions on coagulation and collision kernels.\\

$(A1^{'})$ $K(x,y)\leq k_1 \frac{ (1+x+y)^{\mu}}{(xy)^{\sigma}}$ for all $(x, y)\in \mathbb{R}_{>0} \times \mathbb{R}_{>0}$, $0 \leq \mu-\sigma  \leq  1 $, $\sigma \in [0, 1)$ and some constant $k_1>0$ and $\sigma+\theta  \leq \sigma_2$ with $\sigma \leq \theta$, where $\theta \in [0, 1)$,\\

$(A2^{'})$ $C(x, y)\leq k_2(1+x)^{\alpha}(1+y)^{\alpha}$ for all $(x, y)\in \mathbb{R}_{>0} \times \mathbb{R}_{>0}$,\ $0 \leq \alpha \leq 1 $ such that  $\alpha+1 \leq \sigma_1$ and for some constant $k_2 \geq 0$.\\

Now, we state the following uniqueness theorem
\begin{thm}\label{UniqueThm}
Suppose $g \in \Lambda_{\sigma_1, \sigma_2}^{+}(T)$ is a solution to (\ref{ccfe})--(\ref{in1}) with $g(x, 0) \in \Lambda_{\sigma_1, \sigma_2}^{+}(0)$. Let $(A0)$, $(A1^{'})$, $(A2^{'})$, $(A2)$--$(A5)$ hold. Then, $g$ is a unique solution to (\ref{ccfe})--(\ref{in1}).
\end{thm}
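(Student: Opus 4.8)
\emph{Proof strategy.} The plan is a Gronwall argument on the weighted $L^1$ distance between two solutions. Suppose $g,h\in\Lambda^+_{\sigma_1,\sigma_2}(T)$ both solve (\ref{ccfe})--(\ref{in1}) with the same datum, and set $w:=g-h$, so that $w(\cdot,0)=0$. Fix the auxiliary exponent $\theta$ furnished by $(A1^{'})$ (with $\sigma\le\theta$ and $\sigma+\theta\le\sigma_2$) and introduce the test weight
\begin{equation*}
\phi(x):=x^{\sigma_1}+x^{-\theta},\qquad U(t):=\int_0^\infty \phi(x)\,|w(x,t)|\,dx .
\end{equation*}
Since $\theta\le\sigma_2$ one checks $\phi(x)\le 2\,(x^{\sigma_1}+x^{-\sigma_2})$, whence $U(t)\le 2\,\|w(t)\|_{\sigma_1,\sigma_2}<\infty$ and $U(0)=0$. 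The goal is a differential inequality $U'(t)\le L(t)\,U(t)$ with $L\in L^1(0,T)$; Gronwall's lemma then forces $U\equiv 0$, i.e. $g=h$.

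Because each term on the right of (\ref{ccfe}) is bilinear in the unknown, the difference of the two evolution equations is \emph{linear} in $w$ with coefficients built from $g$ and $h$; e.g. $g(x-y)g(y)-h(x-y)h(y)=g(x-y)\,w(y)+w(x-y)\,h(y)$, and likewise for the collision and depletion products. I would multiply the equation for $\partial_t w$ by $\phi(x)\,\mathrm{sgn}(w(x,t))$, integrate over $(0,\infty)$, and use $\tfrac{d}{dt}|w|=\mathrm{sgn}(w)\,\partial_t w$ to write $U'(t)$ as a sum of coagulation, collision-gain and depletion contributions. For the coagulation and fragmentation integrals I would pass to the symmetrised weak form,
\begin{align*}
\int_0^\infty \psi(x)\int_0^x K(x-y,y)a(x-y)b(y)\,dy\,dx
&=\int_0^\infty\int_0^\infty \psi(x+y)K(x,y)a(x)b(y)\,dy\,dx,\\
\int_0^\infty \psi(x)\int_x^\infty\int_0^\infty B(x|y;z)C(y,z)a(y)b(z)\,dz\,dy\,dx
&=\int_0^\infty\int_0^\infty\Big[\int_0^y\psi(x)B(x|y;z)\,dx\Big]C(y,z)a(y)b(z)\,dz\,dy,
\end{align*}
with $\psi=\phi\,\mathrm{sgn}(w)$ and $|\psi|\le\phi$.

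The estimates then close term by term. The coagulation \emph{loss} pieces carrying $\mathrm{sgn}(w(x))\,w(x)=|w(x)|$ have a favourable (dissipative) sign and may be discarded, while the symmetrised gain and the mixed $w(y)$-pieces are bounded via $\phi(x+y)\le\phi(x)+\phi(y)$ (using $(x+y)^{-\theta}\le\min\{x^{-\theta},y^{-\theta}\}$ and $(x+y)^{\sigma_1}\le 2^{\sigma_1-1}(x^{\sigma_1}+y^{\sigma_1})$). Feeding in $(A1^{'})$, the inner $y$-integrals reduce to weighted moments of $g$ and $h$: the inequality $\sigma\le\theta$ lets the factor $x^{-\sigma}$ at the tested variable be absorbed by the weight $x^{-\theta}$ on $(0,1)$, and $\sigma+\theta\le\sigma_2$ guarantees that $\int_0^\infty(1+x+y)^\mu y^{-\sigma-\theta}g(y)\,dy$ is finite, since the singularity $y^{-\sigma-\theta}$ is dominated by $y^{-\sigma_2}$ near the origin while $\mu-\sigma\le1$ controls the tail. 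The collision-gain term is handled by the fragmentation identity above together with $(A4)$ and $(A5)$: taking $p=\sigma_1$ (or (\ref{mass1}) when $\sigma_1=1$) and $\omega=\theta$ gives $\int_0^y\phi(x)B(x|y;z)\,dx\le\omega_{\sigma_1}y^{\sigma_1}+\eta(\theta)y^{-\theta}\le C\,\phi(y)$, so the $B$-weight is reproduced up to a constant, and the residual growth of $C(y,z)$ is tamed by $(A2^{'})$, where $\alpha+1\le\sigma_1$ ensures $(1+y)^\alpha(1+z)^\alpha$ is controlled by the first-moment weight. Collecting everything yields $U'(t)\le L(t)U(t)$ with $L(t)$ a finite combination of $\|g(t)\|_{\sigma_1,\sigma_2}$ and $\|h(t)\|_{\sigma_1,\sigma_2}$, both bounded on $[0,T]$ since $g,h\in\Lambda^+_{\sigma_1,\sigma_2}(T)$.

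I expect the main obstacle to be the singular coagulation contribution: reproducing the weight $\phi$ after integrating out one variable, i.e. showing that the $(xy)^{-\sigma}$ singularity is absorbed so that the right-hand side is genuinely bounded by $L(t)\,U(t)$ rather than by a more singular moment of $w$. This is precisely the step that forces the relations $\sigma\le\theta$ and $\sigma+\theta\le\sigma_2$ of $(A1^{'})$, and it is also where the Fubini interchanges and the identity $\tfrac{d}{dt}|w|=\mathrm{sgn}(w)\,\partial_t w$ must be justified from the mere $\Lambda_{\sigma_1,\sigma_2}$-integrability of the solutions. Once the coagulation estimate is secured, the collision and fragmentation terms are comparatively routine, controlled by $(A2)$--$(A5)$ and $(A2^{'})$, and Gronwall's inequality completes the proof.
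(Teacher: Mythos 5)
Your overall strategy --- linearise the difference of the two bilinear equations via $g(x)g(y)-h(x)h(y)=g(x)H(y)+h(y)H(x)$, test against a signum-weighted sum of a positive and a negative power of $x$, symmetrise the coagulation and fragmentation integrals by Fubini, and close with Gronwall --- is exactly the paper's (the paper works with the time-integrated form $Q(t)=\int_0^\infty(x+x^{-\theta})\,\mathrm{sgn}(H(x,t))\,[g(x,t)-h(x,t)]\,dx$ rather than differentiating $|w|$ in time, which sidesteps the $\tfrac{d}{dt}|w|=\mathrm{sgn}(w)\,\partial_t w$ justification you flag). However, your choice of weight $\phi(x)=x^{\sigma_1}+x^{-\theta}$ contains a genuine gap that the paper's weight $x+x^{-\theta}$ is specifically designed to avoid. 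First, your claimed subadditivity $\phi(x+y)\le\phi(x)+\phi(y)$ is false for $\sigma_1>1$: the map $x\mapsto x^{\sigma_1}$ is \emph{super}additive, $(x+y)^{\sigma_1}\ge x^{\sigma_1}+y^{\sigma_1}$, and the best available bound $2^{\sigma_1-1}(x^{\sigma_1}+y^{\sigma_1})$ carries a constant $2^{\sigma_1-1}>1$ that destroys the cancellation between the coagulation gain and the dissipative loss term $-(y^{\sigma_1}+y^{-\theta})|w(y)|$, leaving an uncancelled residual $(2^{\sigma_1-1}-1)\,y^{\sigma_1}|w(y)|$ at the tested variable. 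Second, and independently of the constant, the coefficients do not close into $L(t)\,U(t)$: for the admissible kernel $K(x,y)=1+x+y$ (i.e. $\mu=1$, $\sigma=0$) the factor carrying the weight at the $g$-variable is $\int_0^\infty \phi(x)(1+x+y)^{\mu}x^{-\sigma}g(x)\,dx$, which at infinity requires the moment $\mathcal{M}_{\sigma_1+1}$ of $g$ --- not available for a solution merely in $\Lambda_{\sigma_1,\sigma_2}(T)$. The same overflow occurs in the collision terms: $\phi(y)C(y,z)\sim y^{\sigma_1+\alpha}$ would require $\mathcal{M}_{\sigma_1+\alpha}$, whereas the hypothesis $\alpha+1\le\sigma_1$ in $(A2^{'})$ is calibrated precisely so that $y(1+y)^{\alpha}\le 2^{\alpha}y^{1+\alpha}\le 2^{\alpha}y^{\sigma_1}$, i.e. for the \emph{first-moment} weight.

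The fix is the paper's choice $\phi(x)=x+x^{-\theta}$. The first-moment part is exactly additive, so in $A(x,y,t):=\phi(x+y)\mathrm{sgn}(H(x+y,t))-\phi(x)\mathrm{sgn}(H(x,t))-\phi(y)\mathrm{sgn}(H(y,t))$ one obtains $A(x,y,t)H(y,t)\le 2(x+x^{-\theta})|H(y,t)|$ with no residual growth in $y$ at the tested slot; the remaining $y$-dependence enters only through $K$, and $\mu-\sigma\le1$ together with $\sigma\le\theta$ returns exactly the weight $y+y^{-\theta}$ of $Q$, so that Gronwall applies with $\Psi$ built from $\|g\|_{\sigma_1,\sigma_2}$ and $\|h\|_{\sigma_1,\sigma_2}$. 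Everything else in your sketch --- the bilinear splitting, the fragmentation identity using (\ref{mass1}) (which replaces your use of $(A4)$ with $p=\sigma_1$, again because the weight is $x$ rather than $x^{\sigma_1}$) together with $(A5)$ at $\omega=\theta$, and the role of $\sigma+\theta\le\sigma_2$ in controlling $\int_0^1 x^{-\theta-\sigma}g(x,s)\,dx$ --- matches the paper's proof once the weight is corrected.
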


The following interesting examples of coagulation kernels are considered which satisfy $(A0)$--$(A1)$ as well as $(A1^{'})$.\\

$(a)$ Smoluchowski kernel for Brownian motion (continuum regime) \cite{Aldous:1999}\\
\begin{eqnarray*}
 K(x, y)=k(x^{1/3}+y^{1/3})(x^{-1/3}+y^{-1/3}).
\end{eqnarray*}
 $(b)$ Brownian motion ( free molecular regime) \cite{Aldous:1999}\\
\begin{eqnarray*}
 K(x, y)=k(x^{1/3}+y^{1/3})^2 \sqrt{1/{x}+1/{y}}.
\end{eqnarray*}
 $(c)$ Granulation kernel \cite{Camejo:2013}
\begin{eqnarray*}
K(x, y)=k\frac{(x+y)^a}{(x y )^b},\ \ \ \text{for}\ a-b\in [0, 1]\ \text{and}\ b \in [0,1).
\end{eqnarray*}

\section{Existence and Mass Conservation}

In this section, first, we construct a sequence of continuous kernels $(K_n)_{n=1}^{\infty} $ and $(C_n)_{n=1}^{\infty} $ with compact support for each $n \geq 1$, such that
\begin{equation}\label{coakernel1}
K_n(x,y)= K(x, y)\,  \text{if} \ \frac{1}{n} \leq x, y < n,
\end{equation}

\begin{equation}\label{colkernel1}
C_n(x,y)= C(x, y)\,  \text{if}\ \ \frac{1}{n} \leq x, y < n,
\end{equation}

\begin{equation}\label{coakernel2}
K_n(x,y)\leq  K(x, y)\,  \text{if}\  \  0< x, y < \infty,
\end{equation}

and
\begin{equation}\label{colkernel2}
C_n(x,y)\leq  C(x, y)\,  \text{if}\  \  0< x, y < \infty.
\end{equation}

Thus, the truncated form to the continuous coagulation and nonlinear multiple fragmentation equations (\ref{ccfe})--(\ref{in1}) can be written as
\begin{align}\label{trunc1}
\frac{\partial g_n(x,t)}{\partial t}  = & \frac{1}{2} \int_{0}^{x} K_n(x-y,y)g_n(x-y,t)g_n(y,t)dy - \int_{0}^{\infty} K_n(x,y)g_n(x,t)g_n(y,t)dy\nonumber\\
 & +\int_{x}^{\infty}\int_{0}^{\infty}B(x|y;z)C_n(y,z)g_n(z,t)g_n(y,t)dzdy\nonumber\\
 & - \int_{0}^{\infty} C_n(x,y)g_n(x,t)g_n(y,t)dy,
\end{align}
with initial data
\begin{eqnarray}\label{init}
g(x, 0)=g_n(x, 0).
\end{eqnarray}

 From the work of Stewart \cite{Stewart:1989}, Camejo \cite{Camejo:2013} and Camejo et. al. \cite{Camejo:2014I}, we can construct a sequence of nonnegative continuous unique solutions $(g_n)_{n=1}^{\infty}$ to (\ref{trunc1})--(\ref{init}) for sequence of functions of kernels $(K_n)_{n=1}^{\infty}$ and $(C_n)_{n=1}^{\infty}$ with compact support from (\ref{coakernel1}) and (\ref{colkernel1}). These solutions $(g_n(x,t))_{n=1}^{\infty}$ belong to space $\Lambda ^+_{\sigma_1, \sigma_2}(T)$. Moreover, $g_n(x,t)$ satisfies the following
 \begin{eqnarray}\label{truncmass}
\int_0^{\infty} xg_n(x,t)dx \leq \int_0^{\infty} x g_ n(x, 0)dx.
\end{eqnarray}

Next, the following lemma is required for proving the subsequent results.
\begin{lem}\label{uniboundlemma1}(Uniform boundedness of truncated moments)
Let $(A0)$--$(A5)$ hold good and $g_n(x, 0) \in \Lambda^{+}_{\sigma_1, \sigma_2}(0)$. Suppose $g_n(x, t)$ be the solution to (\ref{trunc1})--(\ref{init}). Then, for each $r \in \mathbb{R}$ and $ r \in (-1, \sigma_1] $,
\begin{eqnarray}\label{momentfinite}
\mathcal{M}_{r, n}(t):= \int_0^{\infty} x^r g_n(x, t)dx \leq \mathcal{P}_r(T),
\end{eqnarray}
for $r=1$, $\mathcal{P}_1(T)=\mathcal{P}_1$.
\end{lem}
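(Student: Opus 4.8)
The plan is to test the truncated equation (\ref{trunc1}) against the weight $x^{r}$ and exploit the resulting weak (moment) identity, handling the coagulation and fragmentation contributions separately and symmetrising the coagulation double integral. Multiplying (\ref{trunc1}) by $x^{r}$, integrating over $x\in(0,\infty)$, performing the change of variables $(x-y,y)\mapsto(x,y)$ in the coagulation gain term and applying Fubini's theorem in the fragmentation gain term gives
\begin{align*}
\frac{d}{dt}\mathcal{M}_{r,n}(t) ={}& \frac{1}{2}\int_0^\infty\int_0^\infty\big[(x+y)^r-x^r-y^r\big]K_n(x,y)g_n(x,t)g_n(y,t)\,dx\,dy\\
&+\int_0^\infty\int_0^\infty\Big[\int_0^y x^rB(x|y;z)\,dx-y^r\Big]C_n(y,z)g_n(y,t)g_n(z,t)\,dy\,dz.
\end{align*}
All these manipulations are legitimate because $K_n$ and $C_n$ have compact support and $g_n$ is continuous with finite moments, so every integral converges and differentiation under the integral sign is justified; I would record this reduction first.

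Next I would split according to the value of $r$, establishing the bounds in increasing order of difficulty. For $r=1$ both brackets vanish identically --- the first by linearity, the second by (\ref{mass1}) --- so $\mathcal{M}_{1,n}(t)=\mathcal{M}_{1,n}(0)=:\mathcal{P}_1$ is constant, in agreement with (\ref{truncmass}). For $r=0$ I would \emph{not} discard the coagulation loss: writing $\tfrac{d}{dt}\mathcal{M}_{0,n}=-\tfrac12\int_0^\infty\int_0^\infty K_n g_n(x,t)g_n(y,t)\,dx\,dy+\int_0^\infty\int_0^\infty(N(y)-1)C_n g_n(y,t)g_n(z,t)\,dy\,dz$ and splitting the collision integral over $(0,1)^2$ and its complement, the relation $K\ge2(N-1)C$ from $(A2)$ forces $-\tfrac12K_n+(N-1)C_n\le0$ on the unit square, while on the complement one variable exceeds $1$ so the integral is controlled by $\mathcal{P}_1$; Gronwall's lemma then gives $\mathcal{M}_{0,n}\le\mathcal{P}_0(T)$, and every moment of order $r\in[0,1]$ follows by interpolation between $\mathcal{M}_{0,n}$ and $\mathcal{P}_1$.

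For $r=-\omega\in(-1,0)$ the coagulation bracket is nonpositive and is dropped, whereas by $(A5)$ the fragmentation bracket is at most $(\eta(\omega)-1)y^{-\omega}$; since $C_n(y,z)\le k_2(1+y)^\alpha(1+z)^\alpha$ stays bounded in the singular variable, the gain is a constant multiple of $\mathcal{M}_{-\omega,n}$ times the already--bounded moments $\mathcal{M}_{0,n}$ and $\mathcal{M}_{\alpha,n}$, and Gronwall yields $\mathcal{P}_{-\omega}(T)$. Finally, for $1<r\le\sigma_1$ the fragmentation bracket satisfies $\int_0^y x^rB\,dx-y^r\le(\omega_r-1)y^r\le0$ by $(A4)$ and is dropped, while the coagulation bracket is estimated through $(x+y)^r-x^r-y^r\le C_r(x^{r-1}y+xy^{r-1})$ together with $(A1)$; after symmetrisation the outcome is a sum of products $\mathcal{M}_{a,n}\mathcal{M}_{b,n}$ with $a+b=r+\mu-2\sigma$ whose exponents, because $0\le\mu-\sigma\le1$ and $0\le\sigma<1$, can be arranged so that the larger equals $r$ (hence $\le\sigma_1$) and the smaller, $\mu-2\sigma$, lies in $(-1,1]$ and is controlled by the previous steps. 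This produces a linear inequality $\tfrac{d}{dt}\mathcal{M}_{r,n}\le c_1(t)\mathcal{M}_{r,n}+c_2(t)$ with coefficients built from $\mathcal{P}_1$, $\mathcal{P}_0(T)$ and the negative moments, and a final Gronwall argument gives $\mathcal{P}_r(T)$, in particular bounding the top moment $\mathcal{M}_{\sigma_1,n}$.

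I expect the main obstacle to be this last high--moment estimate: the singular, almost bilinear coagulation kernel transfers mass toward large particles, and one must show that it does not generate a super--linear, gelation--type growth of $\mathcal{M}_{\sigma_1,n}$. This is precisely where the subcriticality $\mu-\sigma\le1$ is indispensable, as it guarantees that the highest exponent produced by coagulation is still $\sigma_1$ while its partner $\mu-2\sigma$ remains within $(-1,1]$ and hence already bounded, so the differential inequality closes linearly rather than quadratically. By contrast the zeroth--moment step, though shorter, is the conceptually delicate one, since it is the unit--square comparison $K\ge2(N-1)C$ --- and not any decay of the kernels --- that keeps the collision--driven fragmentation from blowing up the particle number.
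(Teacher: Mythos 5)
Your proposal is correct, and for the core steps it coincides with the paper's proof: the weak moment identity after symmetrisation and Fubini (justified by the compact supports of $K_n$, $C_n$), the vanishing of both brackets at $r=1$ via (\ref{mass1}) giving the constant $\mathcal{P}_1$, the zeroth moment via the unit-square comparison $K\geq 2(N-1)C$ from $(A2)$ plus control of the complement by $\mathcal{P}_1$ and Gronwall, and the negative moments via $(A5)$ together with dropping the nonpositive coagulation bracket $(x+y)^{-\omega}-x^{-\omega}-y^{-\omega}\leq 0$. Where you genuinely diverge is the superlinear range $1<r\leq\sigma_1$: the paper first treats $r=2$ and then runs an induction on integer orders, expanding $(x+y)^{k+1}-x^{k+1}-y^{k+1}$ by the binomial theorem, merging the two extreme terms and absorbing the middle terms into a finite constant $\Omega(T)$ via the induction hypothesis, before closing with Gronwall; you instead use the single convexity inequality $(x+y)^r-x^r-y^r\leq C_r(x^{r-1}y+xy^{r-1})$, valid for all real $r>1$, together with $(A1)$ and the subcriticality $\mu-\sigma\leq 1$ to close a linear Gronwall inequality directly. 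Your route is arguably cleaner and strictly more general: it covers non-integer $r$ in one stroke, whereas the paper's induction only establishes integer moments (and negative orders $\omega\in(\sigma,1)$), leaving fractional orders in $(0,\sigma_1]$ to an unstated interpolation of the kind you make explicit for $r\in[0,1]$ (e.g. $x^a\leq x^b+x^c$ for $b\leq a\leq c$); the same device is what you need, and should state, to bound the intermediate-order factors such as $\mathcal{M}_{r-1+\mu-\sigma,n}$ and $\mathcal{M}_{1+\mu-\sigma,n}$ arising from splitting $(1+x+y)^{\mu}$, since these exponents may be non-integer and lie strictly between orders already controlled --- after that the inequality is linear in $\mathcal{M}_{r,n}$ exactly as you claim. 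The paper's induction buys more elementary bookkeeping (each middle binomial term is finite by hypothesis), at the cost of generality; your argument buys uniformity in $r$ at the cost of the convexity inequality and the explicit exponent arithmetic $\mu-2\sigma\in(-1,1]$, which you verify correctly.
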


\begin{proof}

Let $r=1$. Then by the direct integration of (\ref{trunc1}) with respect to $x$ from $0$ to $\infty$ after multiplying with the weight $x$, we get
\begin{align}\label{unibound1}
\frac{d}{dt} \mathcal{M}_{1,n}(t)= \int_0^{\infty}x &\bigg[\frac{1}{2}\int_{0}^{x} K_n(x-y,y)g_n(x-y,t)g_n(y,t)dy- \int_{0}^{\infty} K_n(x,y)g_n(x,t)g_n(y,t)dy\nonumber\\
 & +\int_{x}^{\infty}\int_{0}^{\infty} B(x|y;z)C_n(y, z)g_n(y, t)g_n(z, t)dzdy\nonumber\\
& - \int_{0}^{\infty} C_n(x, y)g_n(x, t)g_n(y, t)dy \bigg]dx.
\end{align}

Due to the compact support on $K_n$ and $C_n$, all integrals obtained on the right-hand side of (\ref{unibound1}) are finite. Then, by using Fubini's theorem, it can be seen that the first and second integrals on the right-hand side of (\ref{unibound1}) are equal. Similarly, using Fubini's theorem and (\ref{mass1}), the third and fourth integrals on the right-hand side of (\ref{unibound1}) are exactly same. Hence, they cancel out. Therefore, we find $\mathcal{M}_{1,n}(t) :=\mathcal{P}_1$ (say) is a constant.\\

 Next, we deduce the boundedness of the zeroth moment by integrating (\ref{trunc1}) with respect to $x$ from $0$ to $\infty$ as
\begin{align}\label{unibound2}
\frac{d}{dt} \mathcal{M}_{0,n}(t)= \int_0^{\infty}& \bigg[\frac{1}{2} \int_{0}^{x} K_n(x-y, y)g_n(x-y, t)g_n(y, t)dy- \int_{0}^{\infty} K_n(x, y)g_n(x, t)g_n(y, t)dy\nonumber\\
 & +\int_{x}^{\infty}\int_{0}^{\infty} B(x|y;z)C_n(y, z)g_n(y, t)g_n(z, t)dzdy\nonumber\\
 &- \int_{0}^{\infty} C_n(x,y)g_n(x, t)g_n(y, t)dy\bigg]dx.
\end{align}
Applying $(A2)$, (\ref{N1}) and Fubini's theorem to the first and third integrals on the right-hand side of (\ref{unibound2}), we estimate
\begin{align*}
\frac{d}{dt} \mathcal{M}_{0,n}(t)
 \leq &-\frac{1}{2}\int_0^{1} \int_{0}^{1} [K_n(x, y)-2(N-1)C_n(x, y)]g_n(x, t)g_n(y, t)dydx\nonumber\\
 &+ k_2(N-1)\int_0^{1} \int_{1}^{\infty} (1+x)^{\alpha} (1+y)^{\alpha} g_n(x, t)g_n(y, t)dydx\nonumber\\
 & +k_2(N-1)\int_1^{\infty} \int_{0}^{1}(1+x)^{\alpha} (1+y)^{\alpha} g_n(x, t)g_n(y, t)dydx\nonumber\\
  & +k_2(N-1)\int_1^{\infty} \int_{1}^{\infty} (1+x)^{\alpha} (1+y)^{\alpha} g_n(x, t)g_n(y, t)dydx.
  \end{align*}
 Using (\ref{truncmass}) to the above inequality, we have
  \begin{align*}
\frac{d}{dt} \mathcal{M}_{0,n}(t)   \leq & 8k_2(N-1)\int_0^{1} \int_{1}^{\infty}yg_n(x,t)g_n(y,t)dydx\nonumber\\
    &+4k_2(N-1)\int_1^{\infty} \int_{1}^{\infty} xy g_n(x,t)g_n(y,t)dydx\nonumber\\
   \leq & 8k_2(N-1)\mathcal{P}_1 \mathcal{M}_{0,n}(t) +4k_2(N-1)\mathcal{P}_1^2.
\end{align*}
Therefore, an application of Gronwall's inequality gives
\begin{align*}
\mathcal{M}_{0,n}(t) \leq   [ \mathcal{M}_{0,n}(0)+4k_2(N-1){ \mathcal{P}_1}^2T] e^{8k_2(N-1)\mathcal{P}_1 T} =: \mathcal{P}_0(T).
\end{align*}

Next, for $r \geq 2$, we use induction on $n$, let us first estimate the truncated moment for $r=2$ as
\begin{align}\label{uniboundr2}
\frac{d}{dt} \mathcal{M}_{2,n}(t)=  \int_0^{\infty}x^2 &\bigg[\frac{1}{2}\int_{0}^{x} K_n(x-y,y)g_n(x-y,t)g_n(y,t)dy- \int_{0}^{\infty} K_n(x,y)g_n(x,t)g_n(y,t)dy\nonumber\\
 & +\int_{x}^{\infty}\int_{0}^{\infty}B(x|y;z)C_n(y,z)g_n(z,t)g_n(y,t)dzdy\nonumber\\
& - \int_{0}^{\infty} C_n(x,y)g_n(x,t)g_n(y,t)dy \bigg]dx.
\end{align}
Changing the order of integration to the first and third integrals on right-hand side to (\ref{uniboundr2}) and then applying $(A4)$, we obtain
\begin{align}\label{unibound3}
\frac{d}{dt} \mathcal{M}_{2,n}(t) =& \int_0^{\infty} \int_{0}^{\infty}xy  K_n(x,y)g_n(x,t)g_n(y,t)dydx\nonumber\\
& -(1-\omega_2) \int_{0}^{\infty}\int_{0}^{\infty}x^2 C_n(x,y)g_n(x,t)g_n(y,t)dydx.
\end{align}
The second integral on the right-hand side of (\ref{unibound3}) is non-negative. Therefore, using $(A1)$, we have
\begin{align*}
\frac{d}{dt} \mathcal{M}_{2,n}(t) 
\leq &  k_1 \int_0^{1} \int_{0}^{1}(xy)^{1-\sigma} (1+x+y)^{\mu}g_n(x,t)g_n(y,t)dydx\nonumber\\
& + 2k_1 \int_0^{1} \int_{1}^{\infty}(xy)^{1-\sigma} (1+x+y)^{\mu}g_n(x,t)g_n(y,t)dydx\nonumber\\
&  +k_1 \int_1^{\infty} \int_{1}^{\infty}(xy)^{1-\sigma} (1+x+y)^{\mu}g_n(x,t)g_n(y,t)dydx.
\end{align*}
Next, using $(x+y)^{\mu} \leq k(\mu)(x^{\mu}+ y^{\mu})$, for $x>0$, $y>0$ and $\mu \geq 0$ to the above inequality, we get
\begin{align}\label{unibound5}
\frac{d}{dt} \mathcal{M}_{2,n}(t)  \leq &  3^{\mu} k_1 \int_0^{1} \int_{0}^{1}  g_n(x,t)g_n(y,t)dydx\nonumber\\
& +  3^{\mu}  2k_1 \int_0^{1} \int_{1}^{\infty} y^{1+\mu-\sigma}  g_n(x,t)g_n(y,t)dydx\nonumber\\
&  +k_1 \int_1^{\infty} \int_{1}^{\infty}(xy)^{1-\sigma} (1+x+y)^{\mu}g_n(x,t)g_n(y,t)dydx\nonumber\\
\leq &  3^{\mu} k_1 \mathcal{P}_0(T)^2 +  3^{\mu}  2k_1 \mathcal{P}_0(T) \mathcal{M}_{2,n}(t) +k_1 k(\mu) [\mathcal{P}_1^2+2\mathcal{P}_1 \mathcal{M}_{2,n}(t)   ].
\end{align}
Then, applying Gronwall's inequality to (\ref{unibound5}), we obtain
\begin{align*}
\mathcal{M}_{2,n}(t) \leq \mathcal{P}_2(T),
\end{align*}
where $\mathcal{P}_2(T) := \mathcal{M}_{2,n}(0) e^{(3^{\mu}\mathcal{P}_0(T)+k(\mu)\mathcal{P}_1)2k_1T }+ \frac{1}{2} \frac{ 3^{\mu}  \mathcal{P}_0(T)^2+k(\mu)\mathcal{P}_1^2 }{ 3^{\mu} \mathcal{P}_0(T)+k(\mu)\mathcal{P}_1 }( e^{(3^{\mu}\mathcal{P}_0(T)+k(\mu)\mathcal{P}_1 )2k_1T} -1 )$.\\

Let \begin{eqnarray}\label{mk}
\mathcal{M}_{r,n}(t)\leq \mathcal{P}_r(T)\ \ \text{holds}\ \text{for}\ \ r=k\geq 2.
\end{eqnarray}
Then it is sufficient to show that $\mathcal{M}_{r,n}(t)\leq \mathcal{P}_r(T)$, for $r=k+1$. For this purpose, we multiplying (\ref{trunc1}) by $x^{k+1}$ and taking integration with respect to $x$ between $0$ and $\infty$ to have
\begin{align}\label{unibound6}
\frac{d}{dt}\mathcal{M}_{k+1, n}(t)=&\frac{1}{2} \int_0^{\infty} \int_0^{\infty} [(x+y)^{k+1}-x^{k+1}-y^{k+1}]K_n(x,y)g_n(x,t)g_n(y,t)dydx\nonumber\\
&-(1-\omega_{k+1}) \int_0^{\infty} \int_0^{\infty} x^{k+1}C_n(x,y)g_n(x,t)g_n(y,t)dydx.
\end{align}
Using the non-negativity of the second integral in (\ref{unibound6}) and then applying binomial theorem, we estimate
\begin{align}\label{unibound7}
\frac{d}{dt}\mathcal{M}_{k+1, n}(t)\leq &\frac{1}{2} \int_0^{\infty} \int_0^{\infty} \bigg[ \binom{k+1} {1}x^k y +\underbrace{ \binom{k+1} {2}x^{k-1} y^2+\cdots \binom{k+1}{k-1}x^2y^{k-1}}+\binom{k+1} {k}x y^k \bigg]\nonumber\\
& \times  K_n(x,y)g_n(x,t)g_n(y,t)dydx.
\end{align}

The first and last integrals on  the right-hand side of (\ref{unibound7}) are equal. Therefore, we can merge them. By using $(A1)$ and (\ref{mk}) rest of the integrals on the right-hand side of (\ref{unibound7}) can easily be shown finite individually.
\begin{align}\label{unibound71}
\frac{d}{dt}\mathcal{M}_{k+1, n}(t)\leq & \Omega_1(t) +\Omega(T),
\end{align}
where $\Omega_1(t):=\int_0^{\infty} \int_0^{\infty} \binom{k+1} {1}x^k y K_n(x,y)g_n(x,t)g_n(y,t)dydx $ and $\Omega(T)(\text{finite})=$ rest of integrals except first and last integrals of (\ref{unibound7}).
Next, by using $(A1)$, evaluate $\Omega_1(t)$ as
\begin{align}\label{unibound8}
 \Omega_1(t)
 =&\binom{k+1} {1} k_1 \int_0^1 \int_0^1 x^{k-\sigma} y^{1-\sigma}  (1+x+y)^{\mu}   g_n(x,t)g_n(y,t)dydx\nonumber\\
&+2\binom{k+1} {1} k_1 \int_0^1 \int_1^{\infty} x^{k-\sigma} y^{1-\sigma}  (1+x+y)^{\mu}   g_n(x,t)g_n(y,t)dydx\nonumber\\
&+\binom{k+1} {1} k_1 \int_1^{\infty} \int_1^{\infty} x^{k-\sigma} y^{1-\sigma}  (1+x+y)^{\mu}   g_n(x,t)g_n(y,t)dydx\nonumber\\
 \leq &\binom{k+1} {1} 3k_1 \mathcal{P}_0(T)^2 +2\binom{k+1} {1} k_1 3^{\mu} \mathcal{P}_0(T) \int_1^{\infty}   y^{1+\mu-\sigma}  g_n(y,t)dy\nonumber\\
&+\binom{k+1} {1} k_1 k(\mu) \int_1^{\infty} \int_1^{\infty} [x^{k-\sigma} y^{1-\sigma}+ x^{k+\mu-\sigma} y^{1-\sigma}+x^{k-\sigma} y^{1+\mu-\sigma}]  g_n(x,t)g_n(y,t)dydx\nonumber\\
\leq &\binom{k+1} {1} 3k_1 \mathcal{P}_0(T)^2 +2\binom{k+1} {1} k_1 3^{\mu} \mathcal{P}_0(T) \mathcal{P}_2(T)\nonumber\\
&+\binom{k+1} {1} k_1 k(\mu) [ \mathcal{P}_k(T) \mathcal{P}_1+ \mathcal{P}_1 \mathcal{M}_{k+1, n}(t)+ \mathcal{P}_k(T)\mathcal{P}_2(T)].
\end{align}
Inserting values of (\ref{unibound8}) into (\ref{unibound7}), we obtain
\begin{align}\label{unibound9}
\frac{d}{dt}\mathcal{M}_{k+1, n}(t) \leq &\binom{k+1} {1} 3k_1 \mathcal{P}_0(T)^2 +2\binom{k+1} {1} k_1 3^{\mu} \mathcal{P}_0(T) \mathcal{P}_2(T)\nonumber\\
&+\binom{k+1} {1} k_1 k(\mu) [ \mathcal{P}_k(T) \mathcal{P}_1+ \mathcal{P}_1 \mathcal{M}_{k+1, n}(t)+ \mathcal{P}_k(T)\mathcal{P}_2(T)] +\Omega(T)\nonumber\\
= &\binom{k+1} {1} k_1 k(\mu)  \mathcal{P}_1 \mathcal{M}_{k+1, n}(t)  +\Omega_2(T),
\end{align}
where $\Omega_2(T):= \Omega(T)+ \binom{k+1} {1} 3k_1 \mathcal{P}_0(T)^2 +2\binom{k+1} {1} k_1 3^{\mu} \mathcal{P}_0(T) \mathcal{P}_2(T)+\binom{k+1} {1} k_1 k(\mu)  \mathcal{P}_k(T) \mathcal{P}_1+\binom{k+1} {1} k_1 k(\mu)\mathcal{P}_k(T)\mathcal{P}_2(T)$.
After applying Gronwall's inequality in (\ref{unibound9}), we obtain
\begin{align}
\mathcal{M}_{k+1, n}(t) \leq \mathcal{P}_{k+1}(T),
\end{align}
where $\mathcal{P}_{k+1}(T):= e^{bT}(\frac{b}{\Omega_2(T)}+ \mathcal{M}_{k+1, n}(0))- \frac{b}{\Omega_2(T)}$, and $b:= \binom{k+1} {1} k_1 k(\mu)  \mathcal{P}_1$.\\

Next, we check the uniform boundedness of $\mathcal{M}_{-\omega, n}(t)$ for $\omega \in (\sigma, 1)$. For this, we multiply (\ref{trunc1}) by $x^{-\omega}$ and integrating with respect to $x$ from $0$ to $\infty$ and using $(A6)$ to achieve
\begin{align}\label{unibound11}
\frac{d}{dt}\mathcal{M}_{-\omega, n}(t) = &\frac{1}{2} \int_0^{\infty} \int_0^{\infty} \bigg[ (x+y)^{-\omega}-x^{-\omega}-y^{-\omega} \bigg] K_n(x, y)g_n(x,t)g_n(y, t)dy dx\nonumber\\
&+(\eta(\omega)-1) \int_0^{\infty} \int_0^{\infty} x^{-\omega} C_n(x, y)g_n(x,t)g_n(y, t)dy dx.
\end{align}
We know that $(x+y)^{-\omega} \leq x^{-\omega}$ which implies the non-positivity of the first term on the right-hand side to (\ref{unibound11}).
Therefore, using in (\ref{unibound11}) $(A2)$, we estimate
\begin{align}\label{unibound12}
\frac{d}{dt}\mathcal{M}_{-\omega, n}(t)\leq & (\eta(\omega)-1) \int_0^{\infty} \int_0^{\infty} x^{-\omega} C_n(x, y)g_n(x,t)g_n(y, t)dy dx\nonumber\\
\leq & (\eta(\omega)-1)k_2 [\mathcal{P}_0(T)+\mathcal{P}_1]  \bigg[ \int_0^1 x^{-\omega} (1+x^{\alpha})  g_n(x, t)dx+\int_1^{\infty} x^{-\omega} (1+x^{\alpha})  g_n(x, t)dx  \bigg] \nonumber\\
\leq & 2(\eta(\omega)-1)k_2 [\mathcal{P}_0(T)+\mathcal{P}_1]  \bigg[ \mathcal{M}_{-\omega, n}(t) + \mathcal{P}_1  \bigg].
\end{align}
Applying Gronwall's inequality in (\ref{unibound12}), we obtain
\begin{align*}
\mathcal{M}_{-\omega, n}(t)\leq \mathcal{P}_{-\omega}(T),
\end{align*}
where $\mathcal{P}_{-\omega}(T):= e^{(  2(\eta(\omega)-1)k_2 [\mathcal{P}_0(T)+\mathcal{P}_1]T  )}( \frac{1}{\mathcal{P}_1  } +\mathcal{M}_{-\omega, n}(0) )- \frac{1}{\mathcal{P}_1  }$.
This completes the prove of lemma.

\end{proof}

Next, we require the following lemma.

\begin{lem}\label{weakcompactlemma}
Assume $(A0)$--$(A5)$ hold. Let $g_n(x, t)$ be a solution to (\ref{trunc1})--(\ref{init}) with $g_n(x, 0) \in \Lambda^{+}_{\sigma_1, \sigma_2}(0)$. Then prove that the sequence $\{g_n\}_{n\in \mathds{N}}$ is relatively compact in the uniform convergence topology of continuous functions on each rectangle $\Xi (\lambda_1, \lambda_2; T)$.
\end{lem}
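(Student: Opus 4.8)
The plan is to verify the hypotheses of the Arzel\`a--Ascoli theorem on each closed rectangle $\Xi(\lambda_1,\lambda_2;T)$, namely that the family $\{g_n\}$ is uniformly bounded and equicontinuous there, with both estimates uniform in $n$; relative compactness in the uniform topology of $C(\Xi(\lambda_1,\lambda_2;T))$ then follows for each $\lambda_1,\lambda_2$. The workhorse throughout is Lemma~\ref{uniboundlemma1}: besides the conserved first moment $\mathcal{P}_1$ and the controlled moments $\mathcal{P}_0(T),\mathcal{P}_2(T)$, I would repeatedly use the uniform negative-moment bounds $\mathcal{M}_{-\omega,n}(t)\le\mathcal{P}_{-\omega}(T)$ for $\omega\in[\sigma,1)$, since these are precisely what absorbs the singularities of $K$ along the coordinate axes.

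First I would establish the uniform bound. Writing (\ref{trunc1}) in the form $\partial_t g_n(x,t)=Q_n^{+}(x,t)-g_n(x,t)L_n(x,t)$, with $Q_n^{+}\ge0$ the total gain and $L_n\ge0$ the total loss rate, I would bound, for $x\in[\lambda_1,\lambda_2]$, the collisional-fragmentation gain by $(\tilde{B}/\lambda_1)k_2(\mathcal{P}_0(T)+\mathcal{P}_1)^2$ using $(A3)$, $(A2)$ and $(1+x)^{\alpha}\le1+x$, and the loss rate $L_n$ by a constant via $(A1)$, $(A2)$ and $\mathcal{P}_{-\sigma},\mathcal{P}_0,\mathcal{P}_1$. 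For the coagulation gain I would use the symmetry $\int_0^x=2\int_0^{x/2}$, estimate $(x-y)^{-\sigma}\le(\lambda_1/2)^{-\sigma}$ on $(0,x/2)$, and recognise $\int_0^{x/2}y^{-\sigma}g_n(y,t)\,dy\le\mathcal{P}_{-\sigma}(T)$, so that only a supremum of $g_n$ over the enlarged interval $[\lambda_1/2,\lambda_2]$ survives. Discarding the favourable loss term, this gives a differential inequality of the form $\partial_t g_n(x,t)\le a+b\sup_{[\lambda_1/2,\lambda_2]}g_n(\cdot,t)$, from which a Gronwall-type argument, run on the exhausting family of rectangles, produces a uniform pointwise bound $g_n\le\mathcal{B}(\lambda_1,\lambda_2,T)$.

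Next I would prove equicontinuity from the integrated identity $g_n(x,t)=g(x,0)+\int_0^t\partial_s g_n(x,s)\,ds$. Equicontinuity in $t$ is immediate once Step~1 is available, because $|\partial_t g_n|\le Q_n^{+}+g_nL_n$ is then uniformly bounded on the rectangle. For equicontinuity in $x$, the initial term is controlled by the ($n$-independent) uniform continuity of the continuous datum $g(\cdot,0)$ on $[\lambda_1,\lambda_2]$, and the $x$-modulus of continuity of the gain and loss integrands is estimated using the uniform continuity of $K,C,B$ on compact sets together with the moment bounds. The only singular contributions, the pieces $\int_0^{\delta}$ and $\int_{x-\delta}^{x}$ of the coagulation gain, I would control by $\int_0^{\delta}u^{-\sigma}g_n(u,t)\,du\le\delta^{\omega-\sigma}\mathcal{P}_{-\omega}(T)$ for some $\omega\in(\sigma,1)$, which tends to $0$ as $\delta\to0$ uniformly in $n$. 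Combining the $x$- and $t$-moduli yields joint equicontinuity on the rectangle.

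With uniform boundedness and equicontinuity in hand, Arzel\`a--Ascoli furnishes the asserted relative compactness on each $\Xi(\lambda_1,\lambda_2;T)$. I expect the main obstacle to be the coagulation gain: since $K$ is singular and unbounded and the convolution couples $g_n$ at distinct, smaller masses, neither the uniform bound nor the $x$-equicontinuity is local in $x$, so both must be closed through the negative-moment bounds of Lemma~\ref{uniboundlemma1} and careful bookkeeping in the Gronwall step. By contrast, the collision and fragmentation terms are comparatively benign, thanks to the at most bilinear growth permitted in $(A2)$ and the $1/x$ bound in $(A3)$.
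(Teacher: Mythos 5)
Your overall architecture (uniform bound, then $t$- and $x$-equicontinuity, then Arzel\`a--Ascoli, all powered by the moment bounds of Lemma \ref{uniboundlemma1} including the negative moments) is the paper's, and your Steps 2--3 track the paper's parts $(\alpha_2)$ and $(\alpha_3)$ closely. But your Step 1 has a genuine gap. After symmetrizing and estimating $(x-y)^{-\sigma}\le(\lambda_1/2)^{-\sigma}$ you arrive at $\partial_t g_n(x,t)\le a+b\sup_{[\lambda_1/2,\lambda_2]}g_n(\cdot,t)$ for $x\in[\lambda_1,\lambda_2]$, and propose to close this by a Gronwall argument ``run on the exhausting family of rectangles''. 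It does not close. Writing $M_k(t):=\sup\{g_n(x,t):\lambda_1 2^{-k}\le x\le\lambda_2\}$, your inequality gives the infinite hierarchy $M_k(t)\le A_k+b_k\int_0^t M_{k+1}(s)\,ds$, in which $b_k\sim k_1(1+\lambda_2)^{\mu}\mathcal{P}_{-\sigma}(T)(\lambda_1 2^{-k-1})^{-\sigma}$ grows like $2^{\sigma k}$ (and the fragmentation constant $a_k\sim \tilde{B}\,2^{k}/\lambda_1$ likewise), while $A_k$ contains $\sup_{[\lambda_1 2^{-k},\lambda_2]}g(\cdot,0)$, which need not stay bounded: membership of $g(\cdot,0)$ in $\Lambda^{+}_{\sigma_1,\sigma_2}(0)$ is only a weighted integrability condition and allows, e.g., $g(x,0)\sim x^{-\beta}$ near the origin with $0<\beta<1-\sigma_2$. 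Iterating $m$ times produces the factor $\prod_{j<k}b_j\approx b^{k}2^{\sigma k(k-1)/2}$, which defeats the $1/k!$ of the Picard series, and the remainder after $m$ steps carries $\sup_{[\lambda_1 2^{-m},\lambda_2]}g_n$, for which no bound uniform in $n$ is available; so the induction has no base case and the iteration diverges. The defect is structural: the coagulation convolution couples $[\lambda_1,\lambda_2]$ to \emph{all} smaller masses, so any sup-based bound keeps enlarging the interval toward the singular endpoint $x=0$.

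The paper closes exactly this loop differently: it never takes the supremum inside the convolution. Setting $c_n=x^{-\sigma}g_n$, inequality (\ref{uniform4}) is a closed Volterra convolution inequality for $c_n$ itself; the majorant $E$ is defined as the solution of the convolution integral equation (\ref{uniform5}), computed explicitly by Laplace transform in $x$ as (\ref{uniform6}), and the comparison $c_n\le E_{\epsilon}$ is proved by a first-crossing contradiction, yielding the uniform bound (\ref{S(T)}). The convolution is triangular in $x$ (the value at $x$ involves only masses below $x$), so the Laplace-transform majorant controls the coupling down to $x=0$ in one stroke---this is the Galkin--Dubovskii device your sketch is missing, and without it (or some substitute handling masses below $\lambda_1$) your uniform bound, and hence also your time-equicontinuity, is unproved. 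A smaller omission in your $x$-equicontinuity step: since $|g_n(x_2,\cdot)-g_n(x_1,\cdot)|$ reappears inside the loss terms and inside the coagulation gain, that step cannot be finished by direct estimation; the paper closes it via a Gronwall inequality for the modulus of continuity $\omega_n(t)$ (the terms $\mathcal{I}_4^n$, $\mathcal{I}_5^n$, $\mathcal{I}_9^n$ in (\ref{eqconspace1})), and your ``careful bookkeeping in the Gronwall step'' should be made explicit there, not only in the uniform-bound step.
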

\begin{proof}
For proving Lemma \ref{weakcompactlemma}, we proceed to show the following results:\\

$(\alpha_1)$ the uniform boundedness of the sequence $\{g_n\}_{n\in \mathds{N}}$,\\
\\
$(\alpha_2)$ then the equicontinuity of the sequence $\{g_n\}_{n\in \mathds{N}}$ with respect to the time variable $t$,\\
\\
$(\alpha_3)$ and finally the equicontinuity of the sequence $\{g_n\}_{n\in \mathds{N}}$ with respect to the space variable $x$.\\

\emph{Proof of $(\alpha_1)$}: Here, we show $\{g_n\}_{n \in \mathds{N}}$ is uniformly bounded on $\Xi (\lambda_1, \lambda_2; T)$, where $T>0$ is fixed.

Since the second and fourth terms on right-hand side of (\ref{trunc1}) are non-negative, we have
\begin{align}\label{uniform2}
\frac {\partial g_n(x,t)}{\partial t} \leq &\frac{1}{2}\int_{0}^{x} K_n(x-y,y)g_n(x-y,t)g_n(y,t)dy\nonumber\\
&+\int_{x}^{\infty}\int_{0}^{\infty}B(x|y;z)C_n(y,z)g_n(y,t)g_n(z,t)dzdy.
\end{align}
Using $(A1)$--$(A3)$ and Lemma \ref{uniboundlemma1} in (\ref{uniform2}), we estimate
\begin{align}\label{uniform3}
\frac {\partial g_n(x,t)}{\partial t} \leq &\frac{1}{2} k_1 \int_{0}^{x} \frac{(1+x)^{\mu}}{(x-y)^{\sigma}y^{\sigma}}  g_n(x-y,t)g_n(y,t)dy\nonumber\\
&+k_2 \int_{x}^{\infty}\int_{0}^{\infty}   B(x|y;z) (1+y)^{\alpha}(1+z)^{\alpha} g_n(y,t)g_n(z,t)dzdy\nonumber\\
\leq &\frac{1}{2} k_1 (1+\lambda_2)^{\mu}  (c_n * c_n)(x,t)+k_2 \frac{\tilde{B}}{\lambda_1} [\mathcal{P}_0(T)+\mathcal{P}_1 ]^2,
\end{align}
where $c_n=\frac{g_n(x, t)}{x^{\sigma}} $ and $(c_n * c_n)(x,t)$ is the convolution between $c_n$ with itself. Multiplying $x^{-\sigma}$ and then integrating to (\ref{uniform3}) from $0$ to $t$ with respect to time variable, we obtain
\begin{eqnarray}\label{uniform4}
c_n(x,t) \leq  \tilde{c}(0) + \lambda_1^{-\sigma} \int_0^t \bigg[\frac{1}{2} k_1 (1+\lambda_2)^{\mu} (c_n * c_n)(x, s) +k_2 \frac{\tilde{B}}{\lambda_1} [\mathcal{P}_0(T)+\mathcal{P}_1 ]^2\bigg] ds,
\end{eqnarray}
where $\tilde{c}(0)=\sup_{\lambda_1 \leq x \leq \lambda_2}c_n(x, 0)$.\\

 Let us now define a function to control the right-hand side of the integral inequality (\ref{uniform4}) as
\begin{eqnarray}\label{uniform5}
E(x,t) :=  E(0) +  \int_0^t \bigg[\frac{1}{2} k_1 (1+\lambda_2)^{\mu} \lambda_1^{-\sigma} (E * E)(x, s) + E(x, s) \bigg] ds,
\end{eqnarray}

where $E(0)= \max\{ \tilde{c}(0), k_2 \lambda_1^{-\sigma-1} \tilde{B} [\mathcal{P}_0(T)+\mathcal{P}_1 ]^2 \}$ is a positive constant. Then, applying Laplace transform and then its inverse transformation to (\ref{uniform5}) with respect to $x$, we have
\begin{eqnarray}\label{uniform6}
E(x,t) =  E(0) \exp \bigg( \frac{1}{2} k_1E(0)x (1+\lambda_2)^{\mu} \lambda_1^{-\sigma} (e^t-1)+t \bigg),\ \ \lambda_1 \leq x \leq \lambda_2, 0\leq t \leq T.
\end{eqnarray}
In order to complete the proof of the first part of Lemma \ref{weakcompactlemma}, it is required to show that the following inequality hold
\begin{eqnarray}\label{uniform7}
c_n(x, t) \leq E(x, t),\ \ \ \lambda_1 \leq x \leq \lambda_1,\ 0 \leq t \leq T,\ n \in \mathds{N}.
\end{eqnarray}
(\ref{uniform7}) is shown by a contradiction. Next, define the following auxiliary function as
\begin{eqnarray}\label{uniform8}
E_{\epsilon}(x,t):= E(0)+\epsilon + \int_0^t \bigg[\frac{1}{2} k_1 (1+\lambda_2)^{\mu} \lambda_1^{-\sigma} (E_{\epsilon} * E_{\epsilon})(x, s) + E_{\epsilon}(x, s) \bigg] ds.
\end{eqnarray}
From (\ref{uniform4}) and (\ref{uniform5}), it is clear that $c_n(x, 0) \leq  E_{\epsilon}(x, 0)$, for $\lambda_1 \leq x \leq \lambda_2$. Let us assume that, for $n \geq 1$, there exists a set $A$ such that $c_n(x, t) = E_{\epsilon}(x, t)$ for $(x, t)\in \Xi (\lambda_1, \lambda_2; T)$. Let us choose $(x_1, t_1)\in A$ such that no points of $D:=[\lambda_1, x_1)\times [0, t_1) $ in $A$. Since $c_n$ and $E_{\epsilon}$ are continuous function with respect to time variable, we obtain
\begin{align}\label{uniform9}
c_n(x_1, t_1)=E_{\epsilon}(x_1, t_1)= & E_{\epsilon}(0)+\epsilon + \int_0^{t_1} \bigg[\frac{1}{2} k_1 (1+\lambda_2)^{\mu} \lambda_1^{-\sigma} (E_{\epsilon} * E_{\epsilon})(x, s) + E_{\epsilon}(x, s) \bigg] ds\nonumber\\
\geq & c_n(x_1, 0)+\epsilon + \int_0^{t_1} \bigg[\frac{1}{2} k_1 (1+\lambda_2)^{\mu} \lambda_1^{-\sigma} (c_n * c_n)(x, s) + c_n(x, s) \bigg] ds\nonumber\\
> & c_n(x_1, t_1),
\end{align}
which is a contradiction. This concludes that $A$ is empty set.
This gives
\begin{eqnarray}\label{uniform10}
c_n(x, t) \leq E(x, t),\ \ \ \lambda_1 \leq x \leq \lambda_1,\ 0 \leq t \leq T,\ n \in \mathds{N}.
\end{eqnarray}
Thus, we have
\begin{eqnarray}\label{S(T)}
g_n(x, t) \leq S(T),
\end{eqnarray}
where $S(T):=E(0) \exp \bigg( \frac{1}{2}E(0) k_1 (1+\lambda_2)^{\mu} \lambda_1^{1-\sigma} (e^T-1)+T \bigg) $.
Hence, the sequence $\{g_n\}_{n \in \mathds{N}}$ is uniformly bounded on $\Xi(\lambda_1, \lambda_2; T).$\\


\emph{Proof of $(\alpha_2)$}: We next establish the time equicontinuity of $\{g_n\}_{n=1}^{\infty}$ in the rectangle $\Xi(\lambda_1, \lambda_2; T)$. Let us assume $0\leq \tau_1 \leq \tau_2 \leq T$, for each $n\geq 1$.\\
By the definition of equicontinuity, for an arbitrary $\epsilon >0$, there exists a small positive number $\delta (\epsilon)$ for which
\begin{align}\label{eqcont1}
|g_n(x, \tau_2)-g_n(x, \tau_1))|< \epsilon,   \mbox{~ whenever~} |\tau_2-\tau_1|<\delta(\epsilon).
\end{align}

From (\ref{trunc1}), we have
\begin{align}\label{eqcont2}
|g_n(x, \tau_2-g_n(x, \tau_1))| \leq \int_{\tau_1}^{\tau_2} & \bigg[\frac{1}{2}\int_{0}^{x} K_n(x-y, y)g_n(x-y, t)g_n(y, t)dy\nonumber\\
&+ \int_{0}^{\infty} K_n(x, y)g_n(x, t)g_n(y, t)dy\nonumber\\
&+ \int_{x}^{\infty}\int_{0}^{\infty}B(x|y;z)C_n(y,z)g_n(y, t)g_n(z,t)dzdy\nonumber\\
&+ \int_{0}^{\infty} C_n(x, y)g_n(x, t)g_n(y, t)dy\bigg]dt.
\end{align}

Let us simplify the first integral on the right-hand side in (\ref{eqcont2}) by using first part of Lemma \ref{weakcompactlemma} and $(A1)$, as
\begin{align}\label{eqcont3}
\frac{1}{2}\int_{\tau_1}^{\tau_2} \int_{0}^{x} K_n(x-y,y)g_n(x-y, t)g_n(y, t)dydt \leq &  \frac{1}{2}k_1 \int_{\tau_1}^{\tau_2}\int_{0}^{x} \frac{(1+x)^{\mu} }{(x-y)^{\sigma}y^{\sigma}} g_n(x, t)g_n(y, t)dydt\nonumber\\
\leq &  \frac{1}{2}k_1S(T)^2 (1+\lambda_2)^{\mu} \int_{\tau_1}^{\tau_2}\int_{0}^{x} \frac{1 }{(x-y)^{\sigma}y^{\sigma}}  dyds\nonumber\\
\leq & \frac{k_1}{2}S(T)^2 (1+\lambda_2)^{\mu} \Theta (\tau_2-\tau_1),
\end{align}
where $\Theta:= \frac{ (\Gamma(1-\sigma))^2 \lambda_2^2 \lambda_1^{-2\sigma} }{ \Gamma (2-2\sigma)}$ and $\Gamma$ is the gamma function.
By using Lemma \ref{uniboundlemma1} and (\ref{S(T)}), the second integral on the right-hand side of (\ref{eqcont2}) can be simplified as
\begin{align}\label{eqcont4}
\int_{\tau_1}^{\tau_2} \int_{0}^{\infty} K_n(x, y) & g_n(x, t)g_n(y, t) dydt \leq  k_1 \int_{\tau_1}^{\tau_2} \int_{0}^{\infty} \frac{(1+x+y)^{\mu} }{(xy)^{\sigma}} g_n(x, t)g_n(y, t)dydt \nonumber\\
\leq & k_1 \frac{(1+\lambda_2)^{\mu}}{\lambda_1^{\sigma}} S(T)2^{\mu} \int_{\tau_1}^{\tau_2} \bigg[ \int_{0}^{1} y^{-\sigma} g_n(y, t)dy+\int_{1}^{\infty} y g_n(y, t)dy \bigg]  dt\nonumber\\
\leq & k_1 \frac{(1+\lambda_2)^{\mu}}{\lambda_1^{\sigma}} S(T) (\mathcal{P}_{-\sigma}(T)+\mathcal{P}_1) (\tau_2 -\tau_1).
\end{align}

Using $(A2)$, $(A3)$, Lemma \ref{uniboundlemma1} and (\ref{S(T)}), the third integral on the right-hand side to (\ref{eqcont2}) can be estimated as
\begin{align}\label{eqcont5}
\int_{\tau_1}^{\tau_2}\int_x^{\infty}  \int_0^{\infty} &B(x|y;z) C_n(y,z)g_n(y, t)g_n(z, t)dzdydt \nonumber\\
 \leq & k_2 \widetilde{B} \int_{\tau_1}^{\tau_2} \int_x^{\infty}  \int_0^{\infty} \frac{1}{x} (1+y)^{\alpha} (1+z)^{\alpha}  g_n(y, t)g_n(z, t)dzdydt \nonumber\\
  \leq & k_2 \widetilde{B} [\mathcal{P}_0(T)+\mathcal{P}_1] \frac{1} {\lambda_1} \int_{\tau_1}^{\tau_2} \int_x^{\infty}  (1+y) g_n(y, t) dydt \nonumber\\
  \leq & k_2 \widetilde{B} [\mathcal{P}_0(T)+\mathcal{P}_1]^2  \frac{1}{\lambda_1} ({\tau_2}-{\tau_1}).
\end{align}
The last integral on the right-hand side can be evaluated, by using $(A2)$, Lemma \ref{uniboundlemma1} and (\ref{S(T)}), as
\begin{align}\label{eqcont6}
\int_{\tau_1}^{\tau_2}\int_{0}^{\infty} C_n(x,y)g_n(x, t)g_n(y, t)dydt \leq & k_2 \int_{\tau_1}^{\tau_2} \int_{0}^{\infty} (1+x)^{\alpha}(1+y)^{\alpha} g_n(x, t)g_n(y, t)dy dt\nonumber\\
\leq & k_2 S(T)(1+\lambda_2) \int_{\tau_1}^{\tau_2} \bigg[ \int_{0}^{1} (1+y) g_n(y, t)dy\nonumber\\
&+\int_{1}^{\infty} (1+y)  g_n(y, t)dy \bigg]dt\nonumber\\
\leq & 2k_2 S(T)(1+\lambda_2) [ \mathcal{P}_0(T)+ \mathcal{P}_1 ]  ({\tau_2}-{\tau_1}).
\end{align}
Inserting  (\ref{eqcont3}), (\ref{eqcont4}), (\ref{eqcont5}) and (\ref{eqcont6}) into (\ref{eqcont2}), we conclude that
\begin{align}\label{finalequitime}
|g_n(x, \tau_2)-g_n(x, \tau_1)| \leq \Gamma_1(T) |\tau_2-\tau_1| < \Gamma_1(T) \delta,
\end{align}
where $\Gamma_1(T):=\frac{k_1}{2}S(T)^2 (1+\lambda_2)^{\mu} \Theta +  k_1 \frac{(1+\lambda_2)^{\mu}}{\lambda_1^{\sigma}} S(T) (\mathcal{P}_{-\sigma}(T)+\mathcal{P}_1)+ k_2 \tilde{B} [\mathcal{P}_0(T)+\mathcal{P}_1]^2  \frac{1}{\lambda_1}+ 2k_2 S(T)(1+\lambda_2) [ \mathcal{P}_0(T)+ \mathcal{P}_1 ] $ and for $|\tau_2-\tau_1|<\delta$.
Therefore, the sequence $\{g_n\}_{n \in \mathds{N}}$ is equicontinuous with respect to time variable $t$ on $\Xi(\lambda_1, \lambda_2; T)$.\\
\\


\emph{Proof of $(\alpha_3)$}:  Next, we establish the equicontinuity of $g_n(x,t)$ with respect to the variable $x$ in the rectangle $\Xi (\lambda_1, \lambda_2; T).$
Let $\lambda_1 \leq x_1 \leq x_2 \leq \lambda_2$ and then for each $n\geq 1$, we have
\begin{align*}
|g_n(x_2, t)-g_n(x_1, t)| \leq &|g_0(x_2, t)-g_0(x_1, t)|+ \bigg| \frac{1}{2} \int_0^t \int_{x_1}^{x_2 }K_n(x_2-y,y)g_n(x_2-y,s)g_n(y,s)dyds \bigg| \nonumber\\
&+ \frac{1}{2} \int_0^t \int_0^{x_1}|K_n(x_2-y, y)-K_n(x_1-y, y)|g_n(x_1-y, s)g_n(y, s)dyds\nonumber\\
&+\frac{1}{2} \int_0^t \int_0^{x_1} K_n(x_2-y, y)|g_n(x_2-y, s)-g_n(x_1-y, s)|g_n(y, s)dyds\nonumber\\
&+\int_0^t | g_n(x_2,s)-g_n(x_1, s)|\int_0^{\infty}K_n(x_1, y)g_n(y, s)dyds\nonumber\\
&+\int_0^t g_n(x_2, s)\int_0^{\infty} |K_n(x_2, y)-K_n(x_1, y)|g_n(y,s)dyds\nonumber\\
&+\int_0^t\int_{x_2}^{\infty}\int_0^{\infty}|B(x_2|y;z)-B(x_1|y;z)|C_n(y,z)g_n(y,s)g_n(z,s)dzdyds
\end{align*}
\begin{align}\label{eqconspace1}
&+\int_0^t\int_{x_1}^{x_2 }\int_0^{\infty}B(x_1|y;z)C_n(y,z)g_n(y,s)g_n(z,s)dzdyds\nonumber\\
&+\int_0^t | g_n(x_2, s)-g_n(x_1, s)|\int_0^{\infty}C_n(x_1, y)g_n(y,s)dyds\nonumber\\
&+ \int_0^tg_n(x_2, s)\int_0^{\infty} |C_n(x_2, y)-C_n(x_1, y)|g_n(y,s)dyds\nonumber\\
=& \sum_{i=1}^{10} \mathcal{I}_i^n(t).
\end{align}

According to the construction of kernels, sequence of kernels $\{K_n\}$ and $\{C_n\}$  are equicontinuous over the rectangle $[\lambda_1, \lambda_2] \times [Y_1, Y_2], \lambda_1, \lambda_2, Y_1, Y_2>0,$ and we have $B(x|y;z)$ is equicontinuous over the $[\lambda_1, \lambda_2] \times [Y_1, Y_2] \times [Z_1, Z_2].$ Our aim is to show that when $|x_2-x_1|$ is small enough, then the left-hand side of (\ref{eqconspace1}) is sufficiently small. Corresponding to arbitrary $\epsilon >0$, there exists a $\delta (\epsilon)>0$ with
\begin{eqnarray}\label{eqconspace2}
 \sup_{|x_2-x_1|< \delta} |g_0(x_2)-g_0(x_1)|< \epsilon,
\end{eqnarray}

\begin{eqnarray}\label{eqconspace3}
 \sup_{|x_2-x_1|< \delta} |K_n(x_2, y)-K_n(x_1, y)|< \epsilon,
 \end{eqnarray}

 \begin{eqnarray}\label{eqconspace4}
 \sup_{|x_2-x_1|< \delta} |C_n(x_2, y)-C_n(x_1, y)|< \epsilon,
 \end{eqnarray}
 and
 \begin{eqnarray}\label{eqconspace5}
 \sup_{|x_2-x_1|< \delta} |B(x_2|y;z)-B(x_1|y:z)|< \epsilon.
\end{eqnarray}

 The above inequalities, (\ref{eqconspace3})--(\ref{eqconspace5}) hold uniformly with respect to $n\geq 1$ and $Y_1 \leq y \leq Y_2$ and $Z_1 \leq z \leq Z_2$. We introduce modulus of continuity as
 \begin{eqnarray*}
 \omega _n (t) := \sup_{|x_2-x_1|< \delta} |g_n(x_2, t)-g_n(x_1, t)|,\ \ \ \lambda_1 \leq x_1,~x_2 \leq \lambda_2.
 \end{eqnarray*}
Let us first estimate $\mathcal{I}_2^n(t)$, by using (A1) and (\ref{S(T)}), as
\begin{align}\label{I2}
\mathcal{I}_2^n(t) 
\leq & \frac{1}{2}k_1 \bigg|\int_0^t  \int_{x_1}^{x_2} \frac{(1+x_2)^{\mu} } {(x_2-y)^{\sigma} y^{\sigma} } g_n(x_2-y, s)g_n(y, s)dyds\bigg| \nonumber\\
\leq & \frac{1}{2}k_1 \frac{(1+\lambda_2)^{\mu}}{\lambda_1^{\sigma}}S(T)^2T  \bigg|\int_{x_1}^{x_2} \frac{1 } {(x_2-y)^{\sigma} } dy\bigg|\nonumber\\
\leq & \frac{1}{2}k_1 \frac{(1+\lambda_2)^{\mu}}{\lambda_1^{\sigma}}S(T)^2T \bigg| \bigg[ \frac{(x_2-y)^{1-\sigma} } {1-\sigma }\bigg]_{x_1}^{x_2}\bigg| \nonumber\\
\leq & \frac{1}{2}k_1 \frac{(1+\lambda_2)^{\mu}}{\lambda_1^{\sigma}}S(T)^2T  \frac{|x_2-x_1|^{1-\sigma} } {1-\sigma }=:G_2 \delta^{1-\sigma}(\epsilon).
\end{align}
 $\mathcal{I}_3^n(t)$ can be simplified as
\begin{align}\label{I3}
\mathcal{I}_3^n(t) \leq & \frac{1}{2} \int_0^t  \int_0^{Y_1}|K_n(x_2-y,y)-K_n(x_1-y,y)|g_n(x_1-y,s)g_n(y,s)dyds\nonumber\\
&+ \frac{1}{2} \int_0^t  \int_{Y_1}^{Y_2}|K_n(x_2-y,y)-K_n(x_1-y,y)|g_n(x_1-y,s)g_n(y,s)dyds\nonumber\\
& +\frac{1}{2} \int_0^t  \int_{Y_2}^{\infty}|K_n(x_2-y,y)-K_n(x_1-y,y)|g_n(x_1-y,s)g_n(y,s)dyds.
\end{align}
Let us estimate the first integral on the right-hand side of (\ref{I3}), by using $(A1)$, (\ref{S(T)}), Lemma \ref{uniboundlemma1} and (\ref{uniform10}), as
\begin{align}\label{I31}
\frac{1}{2} \int_0^t  \int_0^{Y_1}& |K_n(x_2-y,y)-K_n(x_1-y,y)|g_n(x_1-y,s)g_n(y,s)dyds\nonumber\\
\leq & k_1 (1+\lambda_2)^{\mu} \int_0^t  \int_0^{Y_1} \frac{1}{(x_1-y)^{\sigma}y^{\sigma}} g_n(x_1-y,s)g_n(y,s)dyds\nonumber\\
\leq & k_1 (1+\lambda_2)^{\mu} E(T) Y_1^{\omega-\sigma} \mathcal{P}_{-\omega}(T)T.
\end{align}
Choose $Y_1$ and $\epsilon$ such that $ Y_1^{\omega-\sigma} \mathcal{P}_{-\omega}(T) <\epsilon/2$. Hence, from (\ref{I31}), we obtain
\begin{align}\label{I32}
\frac{1}{2} \int_0^t  \int_0^{Y_1}& |K_n(x_2-y,y)-K_n(x_1-y,y)|g_n(x_1-y,s)g_n(y,s)dyds
< k_1 (1+\lambda_2)^{\mu} E(T)T \epsilon.
\end{align}
Similarly, the last integral of (\ref{I3}) can be simplified as
\begin{align}\label{I33}
\frac{1}{2} \int_0^t  \int_{Y_2}^{\infty}& |K_n(x_2-y,y)-K_n(x_1-y,y)|g_n(x_1-y,s)g_n(y,s)dyds\nonumber\\
\leq & k_1 (1+\lambda_2)^{\mu} \int_0^t  \int_{Y_2}^{\infty} \frac{1}{(x_1-y)^{\sigma}y^{\sigma}} g_n(x_1-y,s)g_n(y,s)dyds\nonumber\\
\leq & k_1 (1+\lambda_2)^{\mu} E(T) Y_2^{-\sigma} \mathcal{P}_{0}(T)T.
\end{align}
Now, choose $Y_2$ and $\epsilon$ such that  $Y_2^{-\sigma} \mathcal{P}_{0}(T) <\epsilon$. Thus, from (\ref{I33}), we have
\begin{align}\label{I34}
\frac{1}{2} \int_0^t  \int_{Y_2}^{\infty}& |K_n(x_2-y,y)-K_n(x_1-y,y)|g_n(x_1-y,s)g_n(y,s)dyds < k_1 (1+\lambda_2)^{\mu} E(T)T\epsilon.
\end{align}
Using (\ref{I32}) and (\ref{I34}) into (\ref{I3}), we get

\begin{align}\label{I35}
\mathcal{I}_3^n(t) \leq 2 k_1 (1+\lambda_2)^{\mu} E(T)T \epsilon+
+ \frac{1}{2} k_1 S(T)^2(Y_2-Y_1)\epsilon =: G_3 \epsilon.
\end{align}

Next, $\mathcal{I}_4^n(t)$ can be evaluated as
\begin{align}\label{I4}
\mathcal{I}_4^n(t) 
\leq & \frac{1}{2}k_1 S(T)  \int_0^t  \int_0^{x_1} (1+x_2)^{\mu}((x_2-y)y)^{-\sigma}|g_n(x_2-y,s)-g_n(x_1-y,s)|dyds\nonumber\\
\leq & \frac{1}{2}k_1 S(T) (1+\lambda_2)^{\mu}  \int_0^t  \int_0^{x_1}  ((x_2-y)y)^{-\sigma}|g_n(x_2-y,s)-g_n(x_1-y,s)|dyds\nonumber\\
\leq & \frac{1}{2}k_1 S(T)(1+\lambda_2)^{\mu}   \int_0^t \omega_n(s)ds  \int_0^{x_1} ((x_1-y)y)^{-\sigma} dy\nonumber\\
\leq & \frac{1}{2}k_1 S(T)(1+\lambda_2)^{\mu}  \Theta  \int_0^t \omega_n(s)ds=:G_4\int_0^t \omega_n(s)ds .
\end{align}

Further, $\mathcal{I}_5^n(t)$ can be simplified, by using $(A1)$ and Lemma \ref{uniboundlemma1}, as
\begin{align}\label{I5}
\mathcal{I}_5^n(t)
\leq &  k_1 \int_0^t \omega_n(s)  \int_0^{\infty} (1+x_1+y)^{\mu}(x_1y)^{-\sigma} g_n(y, s)dy ds\nonumber\\
\leq &  k_1 k(\mu) \lambda_1^{-\sigma} \int_0^t \omega_n(s)  \int_0^{\infty} [(1+\lambda_2)^{\mu}+y^{\mu}] y^{-\sigma} g_n(y, s)dy ds\nonumber\\
\leq &  k_1 k(\mu) [(1+\lambda_2) \mathcal{P}_{-\sigma}(T)+ \mathcal{P}_0(T)+ \mathcal{P}_1] \lambda_1^{-\sigma} \int_0^t \omega_n(s)  ds=: G_5\int_0^t \omega_n(s)  ds.
\end{align}

Next, $\mathcal{I}_6^n(t)$ can be estimated as

\begin{align}\label{I6}
\mathcal{I}_6^n(t) 
\leq & \int_0^t  g_n(x_2, s)\int_0^{Y_1} |K_n(x_2, y)-K_n(x_1, y)|g_n(y,s)dyds\nonumber\\
& +\int_0^t  g_n(x_2, s)\int_{Y_1}^{Y2} |K_n(x_2, y)-K_n(x_1, y)|g_n(y,s)dyds\nonumber\\
&+ \int_0^t  g_n(x_2, s)\int_{Y_2}^{\infty} |K_n(x_2, y)-K_n(x_1, y)|g_n(y,s)dyds.
\end{align}
Let us simplify the first term on the right-hand side of (\ref{I6}), by using $(x+y)^{\mu} \leq k(\mu)(x^{\mu}+y^{\mu})$, $(A1)$, (\ref{S(T)}) and Lemma \ref{uniboundlemma1}, as
\begin{align}\label{I61}
\int_0^t  g_n(x_2, s)& \int_0^{Y_1} |K_n(x_2, y)-K_n(x_1, y)|g_n(y,s)dyds\nonumber\\
\leq & 2k_1 k(\mu) S(T) (1+\lambda_2)^{\mu} \lambda_1^{-\sigma} \int_0^t \int_0^{Y_1} \frac{(1+y^{\mu})}{ y^{\sigma}}g_n(y,s)dyds\nonumber\\
\leq & 2k_1 k(\mu) S(T) (1+\lambda_2)^{\mu} \lambda_1^{-\sigma}T \bigg[ Y_1^{\omega- \sigma} \mathcal{P}_{-\omega}(T) + Y_1^{\alpha+\omega- \sigma} \mathcal{P}_{-\omega}(T) \bigg].
\end{align}
Choose $Y_1$ and $\epsilon >0$ in such way that $Y_1^{\omega- \sigma} \mathcal{P}_{-\omega}(T) < \epsilon/2$ and $Y_1^{\alpha+\omega- \sigma} \mathcal{P}_{-\omega}(T)< \epsilon/2$. Thus, (\ref{I61}) gives
\begin{align}\label{I611}
\int_0^t  g_n(x_2, s) \int_0^{Y_1} |K_n(x_2, y)-K_n(x_1, y)|g_n(y,s)dyds
\leq  2k_1 k(\mu) S(T) (1+\lambda_2)^{\mu} \lambda_1^{-\sigma}T\epsilon.
\end{align}

Similarly, the last term on the right-hand side of (\ref{I6}) can be simplified as follows
\begin{align}\label{I62}
 S(T) \int_0^t \int_{Y_2}^{\infty} & (K_n(x_2, y)+K_n(x_1, y))g_n(y, s)dyds\nonumber\\
 \leq & 2S(T)k_1 k(\mu) \frac{1}{ \lambda_1^{\sigma}} T (1+\lambda_2)^{\mu} [ Y_2^{-1-\sigma}\mathcal{P}_1+ Y_2^{-2-\sigma}\mathcal{P}_2(T) ].
\end{align}
Choose $Y_2$ and $\epsilon >0$ such that $Y_2^{-1-\sigma}\mathcal{P}_1 < \epsilon/2$ and $Y_2^{-2-\sigma}\mathcal{P}_2(T) < \epsilon/2 $. Hence, from (\ref{I62}), we get
\begin{align}\label{I621}
 S(T) \int_0^t \int_{Y_2}^{\infty} & (K_n(x_2, y)+K_n(x_1, y))g_n(y, s)dyds < 2S(T)k_1 k(\mu) \frac{1}{ \lambda_1^{\sigma}} T (1+\lambda_2)^{\mu}\epsilon.
\end{align}

Substituting (\ref{I611}) and (\ref{I621}) into (\ref{I6}), we have
\begin{align}\label{I63}
\int_0^t g_n(x^{'},s) \int_0^{\infty} &|K_n(x^{'},y)-K_n(x,y)|g_n(y,s)dyds < 2k_1 k(\mu) S(T) (1+\lambda_2)^{\mu} \lambda_1^{-\sigma}T\epsilon\nonumber\\
&+S(T)^2T(Y_2-Y_1)\epsilon+ 2S(T)k_1 k(\mu) \frac{1}{ \lambda_1^{\sigma}} T (1+\lambda_2)^{\mu}\epsilon=:G_6\epsilon.
\end{align}

Now we turn to $\mathcal{I}_7^n(t)$ which can be split into five sub-integrals as
\begin{align}\label{I71}
\mathcal{I}_7^n(t)=&\int_0^t\int_{x_2}^{\infty}\int_0^{\infty}|B(x_1|y;z)-B(x_1|y;z)|C_n(y,z)g_n(y,s)g_n(z,s)dzdyds\nonumber\\
\leq & \int_0^t \bigg\{\int_{x_2}^{Y_1}\int_0^{\infty}  + \int_{Y_1}^{Y_2}\int_0^{Z_1} + \int_{Y_1}^{Y_2}\int_{Z_1}^{Z_2} + \int_{Y_1}^{Y_2}\int_{Z_2}^{\infty} + \int_{Y_2}^{\infty}\int_0^{\infty}  \bigg\}\nonumber\\
&  \bigg\{ |B(x_2|y;z)-B(x_1|y;z)|C_n(y,z)g_n(y,s)g_n(z,s)dzdy\bigg\}ds=:\sum_{j=1}^{5}J_j^n(t).
\end{align}

Next, we estimate each $J_j^n(t)$ on the right-hand side of (\ref{I71}) individually. Let us first evaluate $J_1^n(t)$ on the right-hand side of (\ref{I71}), by using $(A2)$, $(A3)$ and Lemma \ref{uniboundlemma1}, as
\begin{align}\label{J71}
J_1^n(t)
\leq &  2k_2 \frac{ \tilde{B}}{\lambda_1} [\mathcal{P}_0(T) +\mathcal{P}_1]  \int_0^t\int_{x_2}^{Y_1} (1+y) g_n(y,s) dyds\nonumber\\
\leq &  2k_2 \frac{ \tilde{B}}{\lambda_1} [\mathcal{P}_0(T) +\mathcal{P}_1]  Y_1^{\omega}[\mathcal{P}_{-\omega} (T) +\mathcal{P}_{1-\omega}(T)]T.
\end{align}
Choose $Y_1$ and $\epsilon >0$ such that $ Y_1^{\omega}[\mathcal{P}_{-\omega} (T) +\mathcal{P}_{1-\omega}(T)] < \epsilon$. Thus, from (\ref{J71}), we obtain
\begin{align}\label{J11}
J_1^n(t) \leq  2k_2 \frac{ \tilde{B}}{\lambda_1} [\mathcal{P}_0(T) +\mathcal{P}_1] T \epsilon.
\end{align}

Similarly, $J_2^n(t)$  can be simplified using $(A2)$, $(A3)$ and Lemma \ref{uniboundlemma1}, as
\begin{align}\label{J2}
J_2^n(t)
\leq & 2k_2 \frac{ \tilde{B}}{\lambda_1} [\mathcal{P}_0(T) +\mathcal{P}_1] Z_1^{\omega}[\mathcal{P}_{-\omega} (T) +\mathcal{P}_{1-\omega}(T)]T.
\end{align}

Next, choose $Z_1$ and $\epsilon >0$ such that $ Z_1^{\omega}[\mathcal{P}_{-\omega} (T) +\mathcal{P}_{1-\omega}(T)] < \epsilon$. Thus, from (\ref{J2}), we obtain
\begin{align}\label{J21}
J_2^n(t) \leq  2k_2 \frac{ \tilde{B}}{\lambda_1} [\mathcal{P}_0(T) +\mathcal{P}_1] T \epsilon.
\end{align}

Now, $J_4^n(t)$  can be evaluated by applying $(A2)$, $(A3)$ and Lemma \ref{uniboundlemma1}, as
\begin{align}\label{J4}
J_4^n(t)
\leq & 2 \frac{ \tilde{B}}{\lambda_1} k_2 [\mathcal{P}_0(T) +\mathcal{P}_1] \int_0^t \int_{Z_2}^{\infty} (1+z) g_n(z,s) dy ds\nonumber\\
\leq & 2 \frac{ \tilde{B}}{\lambda_1} k_2 [\mathcal{P}_0(T) +\mathcal{P}_1]T \bigg[\frac{\mathcal{P}_1 +\mathcal{P}_2(T)}{Z_2} \bigg].
\end{align}
Choose $Z_2$ and $\epsilon >0$ in such way that $\bigg[\frac{\mathcal{P}_1 +\mathcal{P}_2(T)}{Z_2} \bigg] < \epsilon $. Then, (\ref{J4}) gives
\begin{align}\label{J41}
J_4^n(t) \leq 2 \frac{ \tilde{B}}{\lambda_1} k_2 [\mathcal{P}_0(T) +\mathcal{P}_1]T \epsilon.
\end{align}

Finally, the last term $J_5^n(t)$ can estimated by using $(A2)$, $(A3)$ and Lemma \ref{uniboundlemma1}, as
\begin{align}\label{J5}
J_5^n(t)
\leq & 2 \frac{ \tilde{B}}{\lambda_1} k_2 [\mathcal{P}_0(T) +\mathcal{P}_1]  T \bigg[\frac{\mathcal{P}_1 +\mathcal{P}_2(T)}{Y_2} \bigg].
\end{align}
Similarly, choose $Y_2$ and $\epsilon$ in such way that $\bigg[\frac{\mathcal{P}_1 +\mathcal{P}_2(T)}{Y_2} \bigg] < \epsilon $. Then, (\ref{J5}) gives
\begin{align}\label{J51}
J_5^n(t) \leq 2 \frac{ \tilde{B}}{\lambda_1} k_2 [\mathcal{P}_0(T) +\mathcal{P}_1]T \epsilon.
\end{align}

Inserting the values of (\ref{J11}), (\ref{J21}), (\ref{J41}) and (\ref{J51}) into (\ref{I71}), we have
\begin{align}\label{I7}
I_7^n(t) \leq & 2k_2 \frac{ \tilde{B}}{\lambda_1} [\mathcal{P}_0(T) +\mathcal{P}_1] T \epsilon +2k_2 \frac{ \tilde{B}}{\lambda_1} [\mathcal{P}_0(T) +\mathcal{P}_1] T \epsilon+2k_2 [\mathcal{P}_0(T) +\mathcal{P}_1]^2 T\epsilon\nonumber\\
 &+2 \frac{ \tilde{B}}{\lambda_1} k_2 [\mathcal{P}_0(T) +\mathcal{P}_1]T \epsilon+2 \frac{ \tilde{B}}{\lambda_1} k_2 [\mathcal{P}_0(T) +\mathcal{P}_1]T\epsilon =: G_7\epsilon.
\end{align}

$\mathcal{I}_8^n(t)$ can be evaluated, by using $(A2)$, $(A3)$, (\ref{S(T)}) and Lemma \ref{uniboundlemma1}, as
\begin{align}\label{I8}
\mathcal{I}_8^n(t)
\leq & \frac{ \tilde{B}}{\lambda_1} k_2 (1+\lambda_2) \int_0^t \int_{x_1}^{x_2 } \int_0^{\infty} (1+z)g_n(y,s)g_n(z,s)dzdyds\nonumber\\
\leq & \frac{ \tilde{B}}{\lambda_1} k_2 (1+\lambda_2) [\mathcal{P}_0(T)+\mathcal{P}_1] S(T) T |x_2-x_1|\nonumber\\
\leq & \frac{ \tilde{B}}{\lambda_1} k_2 (1+\lambda_2) [\mathcal{P}_0(T)+\mathcal{P}_1] S(T) T \delta(\epsilon)=:G_8 \delta(\epsilon).
\end{align}

Using $(A2)$ and Lemma \ref{uniboundlemma1}, $\mathcal{I}_9^n(t)$ can be estimated as
\begin{align}\label{I9}
\mathcal{I}_9^n(t)=
\leq & k_2 \int_0^t \sup | g_n(x_1, s)-g_n(x_1, s)| \int_0^{\infty}(1+x)(1+y)g_n(y,s)dyds\nonumber\\
\leq & k_2(1+\lambda_2)T(\mathcal{P}_0(T)+\mathcal{P}_1) \int_0^t \omega_n(s) ds =: G_9 \int_0^t \omega_n(s) ds.
\end{align}

Now, $\mathcal{I}_{10}^n(t)$ can be split in the following three sub-integrals
\begin{align}\label{I10}
\mathcal{I}_{10}^n(t)  
\leq & S(T)\int_0^t  \int_0^{Y_1} |C_n(x_2, y)-C_n(x_1, y)|g_n(y,s)dyds + S(T)^2 T (Y_2-Y_1) \epsilon \nonumber\\
&+ S(T) \int_0^t  \int_{Y_2}^{\infty} |C_n(x_2, y)-C_n(x_1, y)|g_n(y,s)dyds.
\end{align}

Next, the first term on the right-hand side of (\ref{I10}) can be simplified, by using $(A2)$ and Lemma \ref{uniboundlemma1}, as
\begin{align}\label{I11}
S(T)\int_0^t  \int_0^{Y_1} & |C_n(x_2, y)-C_n(x_1, y)|g_n(y,s)dyds \nonumber\\
\leq & 2k_2  (1+\lambda_2) S(T)T [Y_1^{\omega} \mathcal{P}_{-\omega} +Y_1^{\omega} (\mathcal{P}_{-\omega}(T)+\mathcal{P}_1)].
\end{align}
Choose $\epsilon >0$ such that $Y_1^{\omega} \mathcal{P}_{-\omega} < \epsilon /2$ and $Y_1^{\omega} [\mathcal{P}_{-\omega}(T)+\mathcal{P}_1] < \epsilon /2$. Thus, from (\ref{I11}), we obtain
\begin{align}\label{I12}
S(T)\int_0^t  \int_0^{Y_1} & |C_n(x_2, y)-C_n(x_1, y)|g_n(y,s)dyds
\leq   (1+\lambda_2) S(T)T \epsilon.
\end{align}
By applying $(A2)$ and Lemma \ref{uniboundlemma1}, the last term on the right-hand side of (\ref{I10}) can be estimated as
\begin{align}\label{I13}
S(T) \int_0^t  \int_{Y_2}^{\infty} & |C_n(x_2, y)-C_n(x_1, y)|g_n(y,s)dyds\nonumber\\
\leq & 2k_2S(T) (1+\lambda_2)T \bigg[ \frac{\mathcal{P}_1+ \mathcal{P}_2(T)}{Y_2} \bigg].
\end{align}
Choose $Y_2$ and $\epsilon >0$ such that $\frac{\mathcal{P}_1+ \mathcal{P}_2(T)}{Y_2} < \epsilon$. Hence, we have
\begin{align}\label{I14}
S(T) \int_0^t  \int_{Y_2}^{\infty} & |C_n(x_2, y)-C_n(x_1, y)|g_n(y,s)dyds < 2k_2S(T) (1+\lambda_2)T \epsilon.
\end{align}
Inserting (\ref{I12}) and (\ref{I14}) into (\ref{I10}), we get

\begin{align}\label{I15}
\mathcal{I}_{10}^n(t) \leq  (1+\lambda_2) S(T)T \epsilon+ S(T)^2 T (Y_2-Y_1) \epsilon + 2k_2S(T) (1+\lambda_2)T \epsilon=: G_{10}\epsilon.
\end{align}

Inserting (\ref{I2}), (\ref{I34}), (\ref{I5}), (\ref{I63}), (\ref{I7}), (\ref{I8}), (\ref{I9}), (\ref{I15}) into (\ref{eqconspace1}), we have
\begin{align*}
|g_n(x_2, t)-g_n(x_1, t)| \leq  \bigotimes (\epsilon).
\end{align*}
$\bigotimes(\epsilon):=\epsilon+G_2\delta^{1-\sigma}(\epsilon) +G_3\epsilon+G_4\int_0^t \omega_n(s)ds+ G_5 \epsilon+G_6\epsilon+G_7\epsilon+G_8 \delta(\epsilon)+G_9 \int_0^t \omega(s)+G_{10}\epsilon. $

Then, by Gronwall's inequality and for arbitrary $\epsilon$, we obtain
\begin{align}\label{finalequispace}
\sup_{|x_2-x_1| <\delta}|g_n(x_2, t)-g_n(x_1, t)| < \epsilon.
\end{align}

This implies $\{g_n\}_{n \in \mathds{N}}$ is equi-continuous with respect to the space variable $x$. Then, from (\ref{finalequitime}), (\ref{finalequispace}) and Arzela's theorem \cite{Ash:1972, Edwards:1965}, we confirm that $\{g_n(x, t)\}_{n \in \mathds{N}}$ is relatively compact in $\Xi(\lambda_1, \lambda_2; T)$.

This completes the prove of Lemma \ref{weakcompactlemma}.
\end{proof}


\begin{proof}
\emph{of the Theorem \ref{Existence Thm}}:  To prove the Theorem \ref{Existence Thm}, we require to use the diagonal method. According to this process, we select a subsequence $\{g_j\}_{j=1}^{\infty}$ of $\{g_n\}_{n \in \mathbb{N}}$ converging uniformly on each compact set in $\Xi$ to a non-negative continuous function $g$.\\
Let us consider the following integral as
\begin{align*}
\int_{u_1}^{u_2} (x^{-m}+ x^l) g(x,t)dx,\ \ 0 \leq l \leq \sigma_1,\ 0 \leq m < \sigma_2,\ 0<u_1 < u_2.
\end{align*}
Since for all $\epsilon >0$, there exists $j\geq 1$ such that
\begin{align*}
\int_{u_1}^{u_2} (x^{-m}+ x^l) |g(x,t)-g_j(x,t)|dx \leq \epsilon.
\end{align*}
Since $u_1$, $u_2$ and $\epsilon $ are arbitrary. Thus, we obtain
\begin{align}\label{Existence0}
\int_0^{\infty} (x^{-m}+ x^l)  g(x,t)dx \leq   \mathcal{P}_{-m}(T) +\mathcal{P}_l(T).
\end{align}

  In order to complete the proof of Theorem \ref{Existence Thm}, we require to show that $g$ is indeed a solution to (\ref{ccfe})--(\ref{in1}). For this let us consider the following equation
\begin{align}\label{Existence1}
 (g_j-g)(x,t)&+g(x,t)\nonumber\\ =& g_i(x, 0)+\int_0^t \bigg[ \frac{1}{2} \int_0^{x}(K_j-K)(x-y,y)g_j(x-y,s)g_j(y,s)dy\nonumber\\
&+ \frac{1}{2} \int_0^{x} K(x-y,y)[g_j(x-y,s)-g(x-y,s)]g_j(y,s)dy\nonumber\\
&+ \frac{1}{2} \int_0^{x} K(x-y,y)[g_j(y,s)-g(y,s)] g(x-y,s)dy\nonumber\\
 &+ \frac{1}{2} \int_0^{x} K(x-y,y)g(x-y,s)g(y,s)dy\nonumber\\
 &-g_j(x,s)\int_0^{\infty}(K_j-K)(x,y)g_j(y,s)dy-(g_j-g)(x,s)\int_0^{\infty}K(x,y)g_j(y,s)dy\nonumber\\
  &-g(x,s)\int_0^{\infty}K(x,y)(g_j-g)(y,s)dy-g(x,s)\int_0^{\infty}K(x,y)g(y,s)dy\nonumber\\
&+\int_{x}^{\infty}\int_{0}^{\infty}B(x|y;z)(Cj-C)(y,z)g_j(y,s)g_j(z,s)dzdy\nonumber\\
&+\int_{x}^{\infty}\int_{0}^{\infty}B(x|y;z)C(y,z)(g_j-g)(y,s)g(z,s)dzdy\nonumber\\
&+\int_{x}^{\infty}\int_{0}^{\infty}B(x|y;z)C(y,z)(g_j-g)(z,s)g(y,s)dzdy\nonumber\\
&+\int_{x}^{\infty}\int_{0}^{\infty}B(x|y;z)C(y,z)g(y,s)g(z,s)dzdy\nonumber\\
 &-g_j(x,s)\int_0^{\infty}(C_j-C)(x,y)g_j(y,s)dy-(g_j-g)(x,s)\int_0^{\infty}C(x,y)g_j(y,s)dy\nonumber\\
  &-g(x,s)\int_0^{\infty}C(x,y)(g_j-g)(y,s)dy-g(x,s)\int_0^{\infty}C(x,y)g(y,s)dy \bigg]ds,
\end{align}
where  $K_{j}-K+K$, $C_{j}-C+C$ and $g_j-g+g$ have replaced in place of $K_j$, $C_j$ and $g_j$, respectively.

Now, taking limit $j \to \infty$ in (\ref{Existence1}), it can easily be seen that all the finite integrals tend to $0$. Let us estimate the following integrals as
\begin{align}\label{Existence2}
\bigg|\int_{Y_2}^{\infty}(K_j-K)(x,y)g_j(y,s)dy \bigg| \leq 2 k_1 k(\mu)\frac{1}{x^{\sigma}} T(1+x)^{\mu} [ Y_2^{-1-\sigma}\mathcal{P}_1+ Y_2^{-2-\sigma}\mathcal{P}_2(T) ],
\end{align}

\begin{align}\label{Existence3}
\bigg| \int_0^{\infty}K(x,y)g_j(y) dy \bigg| \leq k_1 k(\mu) [(1+x)^{\mu} \mathcal{P}_{-\sigma}(T)+ \mathcal{P}_0(T)+ \mathcal{P}_1] x^{-\sigma},
\end{align}

\begin{align}\label{Existence4}
\bigg| \int_{Y_2}^{\infty}K(x,y)(g_j-g)(y,s)dy\bigg| \leq 2 k_1 k(\mu)\frac{1}{x^{\sigma}} T(1+x)^{\mu} [ Y_2^{-1-\sigma}\mathcal{P}_1+ Y_2^{-2-\sigma}\mathcal{P}_2(T) ],
\end{align}

\begin{align}\label{Existence5}
\bigg| \int_{Y_1}^{\infty}\int_{Z_2}^{\infty}B(x|y;z)(Cj-C)(y,z)g_j(y,s)g_j(z,s)dzdy \bigg| \leq 2k_2\frac{\tilde{B}}{x} [  \mathcal{P}_0(T)+ \mathcal{P}_1] \frac{[  \mathcal{P}_1+ \mathcal{P}_2(T)]}{Z_2},
\end{align}

\begin{align}\label{Existence6}
\bigg| \int_{Y_2}^{\infty}\int_{0}^{\infty}B(x|y;z)C(y,z)(g_j-g)(y,s)g(z,s)dzdy\bigg| \leq 2k_2\frac{\tilde{B}}{x} [  \mathcal{P}_0(T)+ \mathcal{P}_1] \frac{[  \mathcal{P}_1+ \mathcal{P}_2(T)]}{Y_2},
\end{align}

\begin{align}\label{Existence7}
\bigg|\int_{x}^{\infty}\int_{Z_2}^{\infty}B(x|y;z)C(y,z)(g_j-g)(z,s)g(y,s)dzdy\bigg| \leq 2k_2\frac{\tilde{B}}{x} [  \mathcal{P}_0(T)+ \mathcal{P}_1] \frac{[  \mathcal{P}_1+ \mathcal{P}_2(T)]}{Z_2} ,
\end{align}

\begin{align}\label{Existence8}
\bigg|\int_{Y_2}^{\infty}(C_j-C)(x,y)g_j(y,s)dy \bigg| \leq 2k_2 (1+x)T \bigg[ \frac{\mathcal{P}_1+ \mathcal{P}_2(T)}{Y_2} \bigg],
\end{align}

\begin{align}\label{Existence9}
\bigg|\int_{Y_2}^{\infty}C(x,y)g_j(y,s)dy \bigg| \leq k_2 (1+x)T \bigg[ \frac{\mathcal{P}_1+ \mathcal{P}_2(T)}{Y_2} \bigg],
\end{align}

and
\begin{align}\label{Existence10}
\bigg| \int_{Y_2}^{\infty}C(x,y)(g_j-g)(y,s)dy \bigg| \leq 2k_2 (1+x)T \bigg[ \frac{\mathcal{P}_1+ \mathcal{P}_2(T)}{Y_2} \bigg].
\end{align}
Using a similar argument for choosing $Y_2$, $Z_2$ and $\epsilon >0$ as discussed in $(\alpha_3)$, we can easily show that the right-hand side of each integrals (\ref{Existence2}), (\ref{Existence4})--(\ref{Existence10}) tend to zero as $\epsilon \to 0$.\\

Finally, we obtain that the function $g$ is a solution to (\ref{ccfe})--(\ref{in1}) written in the following integral form:
\begin{align}\label{Existence}
 g(x,t)=& g_0(x)+\int_0^t \bigg[\frac{1}{2} \int_0^{x} K(x-y,y)g(x-y,s)g(y,s)dy-g(x,s)\int_0^{\infty}K(x,y)g(y,s)dy\nonumber\\
 &+\int_{x}^{\infty}\int_{0}^{\infty}B(x|y;z)C(y,z)g(y,s)g(z,s)dzdy-g(x,s)\int_0^{\infty}C(x,y)g(y,s)dy \bigg]ds.
\end{align}
From above estimates and the continuity of $g$, we confirm that the right-hand to (\ref{Existence}) is also continuous function on $\Xi$. Next,
taking partial differentiation of (\ref{Existence}) with respect to time variable $t$, which confirms that $g$ is a continuous differentiable solution to (\ref{ccfe})--(\ref{in1}) and from (\ref{Existence0}), $g\in \Lambda^{+}_{\sigma_1, \sigma_2}(T)$.  This completes the proof of the existence Theorem \ref{Existence Thm}.
\end{proof}

\subsection{Mass conservation}

In this section, we argue on the mass conserving property of the solution $g \in \Lambda^{+}_{\sigma_1, \sigma_2}(T)$ with $\sigma_2 \geq 2$ by proving Theorem \ref{MassThm}.

\begin{proof}
\emph{of Theorem \ref{MassThm}}:
In order to show that $g$ is indeed a mass mass conserving solution to (\ref{ccfe})--(\ref{in1}), it is sufficient to show that $\mathcal{M}_1(t)=\mathcal{M}_1(0)$ for all $t \in (0, T]$. Multiplying (\ref{ccfe}) by $x$ and taking integration from with respect to $x$ between $0$ to $\infty$, applying $(A1)$, $(A2)$, (\ref{mass1}) and using the norm of $g$ in $\Lambda_{\sigma_1, \sigma_2}^{+}(T)$ with $\sigma_1 \geq 2$, one can see that
\begin{eqnarray*}
\frac{d \mathcal{M}_1(t)}{dt}=0\ \ \forall \ t \in [0, T].
\end{eqnarray*}
This implies
\begin{eqnarray*}
\mathcal{M}_1(t) =\mathcal{M}_1(0)\ \ \forall \ t \in [0, T].
\end{eqnarray*}
This completes the proof of the Theorem \ref{MassThm}.
\end{proof}

\section{Uniqueness}
In this section, we investigate the uniqueness of solutions to (\ref{ccfe})--(\ref{in1}) by proving Theorem \ref{UniqueThm}.

\begin{proof} \emph{of Theorem \ref{UniqueThm}}:
Let $g$ and $h$ be two solutions to (\ref{ccfe})--(\ref{in1}) on $[0,T]$, where $T>0 $, with $ g(0)=h(0)$. Set $ H:=g-h$. We define $Q(t)$ as
\begin{eqnarray*}
Q(t):= \int_{0}^{\infty}(x+x^{-\theta}) |H(x,t)|dx, \ \ \theta \in [0, 1)\ \text{with}\ \sigma +\theta \leq \sigma_1\ \text{and}\ \sigma \leq \theta.
\end{eqnarray*}
From the properties of the signum function, we get
\begin{eqnarray}\label{uni2 2}
Q(t) =\int_{0}^{\infty} (x+x^{-\theta}) \mbox{sgn(H(x,t))} {[g(x,t)-h(x,t)]dx,}
\end{eqnarray}
where
\begin{align}\label{uni2 3}
 g(x,t)-h(x,t) =& \frac{1}{2}\int_{0}^{t}\int_{0}^{x}K(x-y,y)[g(x-y,s)g(y,s)-h(x-y,s)h(y,s)]dyds\nonumber\\
 &- \int_{0}^{t}\int_{0}^{\infty}K(x,y)[g(x,s)g(y,s)-h(x,s)h(y,s)]dyds\nonumber\\
 &+ \int_{0}^{t}\int_{x}^{\infty}\int_{0}^{\infty}B(x|y;z)C(y,z)[g(y,s)g(z,s)-h(y,s)h(z,s)]dzdyds\nonumber\\
 &- \int_{0}^{t}\int_{0}^{\infty}C(x,y)[g(x,s)g(y,s)-h(x,s)h(y,s)]dyds.
\end{align}
 Substituting (\ref{uni2 3}) into (\ref{uni2 2}) and simplifying it further, we obtain
\begin{align}\label{uni2 4}
Q(t)=& \frac{1}{2}\int_{0}^{t}\int_{0}^{\infty}\int_{0}^{\infty}[( x+y +(x+y)^{-\theta}) \mbox{sgn(H(x+y,s))}-(x +x^{-\theta}) \mbox{sgn(H(x,s))}\nonumber\\
&-(y +y^{-\theta}) \mbox{sgn(H(y, s))} ]  K(x,y)[g(x,s)g(y,s)-h(x,s)h(y,s)]dydxds\nonumber\\
&+ \int_{0}^{t}\int_{0}^{\infty}\int_{x}^{\infty}\int_{0}^{\infty}(x +x^{-\theta}) \mbox{sgn(H(x,s))}B(x|y;z)C(y,z)\nonumber\\
&\hspace{2cm}\times [g(y,s)g(z,s)-h(y,s)h(z,s)]dzdydxds\nonumber\\
& -\int_{0}^{t}\int_{0}^{\infty}\int_{0}^{\infty}(x+x^{-\theta}) \mbox{sgn(H(x,s))}C(x,y)[g(x,s)g(y,s)-h(x,s)h(y,s)]dydxds.
\end{align}
We know that
\begin{eqnarray}\label{uni2 41}
g(x,s)g(y,s)-h(x,s)h(y,s)= g(x,s)H(y,s)+h(y,s)H(x,s).
\end{eqnarray}

Using (\ref{uni2 41}), Fubini's theorem and properties of signum function into (\ref{uni2 4}), we have
\begin{align}\label{uni2 42}
Q(t)=&\frac{1}{2}\int_{0}^{t}\int_{0}^{\infty}\int_{0}^{\infty}[( x+y +(x+y)^{-\theta}) \mbox{sgn(H(x+y,s))}-(x +x^{-\theta}) \mbox{sgn(H(x,s))}\nonumber\\
&-(y +y^{-\theta}) \mbox{sgn(H(y, s))} ]  K(x,y)g(x,s)H(y,s) dydxds\nonumber\\
&+ \frac{1}{2}\int_{0}^{t}\int_{0}^{\infty}\int_{0}^{\infty}[( x+y +(x+y)^{-\theta}) \mbox{sgn(H(x+y,s))}-(x +x^{-\theta}) \mbox{sgn(H(x,s))}\nonumber\\
&-(y +y^{-\theta}) \mbox{sgn(H(y, s))} ]  K(x,y) h(y, s)H(x, s)dydxds\nonumber\\
&+ \int_{0}^{t}\int_{0}^{\infty}\int_{0}^{y }\int_{0}^{\infty}(x+x^{-\theta})  B(x|y;z)C(y,z) g(y,s) |H(z,s)| dzdxdyds\nonumber\\
&+ \int_{0}^{t}\int_{0}^{\infty}\int_{0}^{y }\int_{0}^{\infty}(x+x^{-\theta}) B(x|y;z)C(y,z) h(z,s) |H(y,s)|dzdxdyds\nonumber\\
& +\int_{0}^{t}\int_{0}^{\infty}\int_{0}^{\infty}(x+x^{-\theta})  C(x,y)g(x,s) |H(y,s)| dydxds\nonumber\\
& -\int_{0}^{t}\int_{0}^{\infty}\int_{0}^{\infty}(x+x^{-\theta}) C(x,y) h(y,s) |H(x,s)|dydxds.
\end{align}

%

Now, let us define $A$ by
\begin{align*}
A(x,y,t):=(x+y+(x+y)^{-\theta}) \mbox{sgn(H(x+y, t))} -(x+x^{-\theta})\mbox{sgn(H(x, t))}-(y+y^{-\theta})\mbox{sgn(H(y, t))}.
\end{align*}
Substituting $A(x, y, t)$ into (\ref{uni2 42}) and then using (\ref{mass1}) and (\ref{N1}), we obtain
\begin{align}\label{sum}
Q(t)\leq & \frac{1}{2}\int_{0}^{t}\int_{0}^{\infty}\int_{0}^{\infty}  A(x,y,s) K(x,y)g(x,s) H(y,s) dydxds\nonumber\\
 &+ \frac{1}{2}\int_{0}^{t}\int_{0}^{\infty}\int_{0}^{\infty}   A(x,y,s) K(x,y)h(y,s) H(x,s) dydxds\nonumber\\
&+2 \int_{0}^{t}\int_{0}^{\infty}\int_{0}^{\infty} xC(x,y)g(x,s)|H(y,s)|dydxds\nonumber\\
&+ [\eta(\theta)+1]  \int_{0}^{t}\int_{0}^{\infty}\int_{0}^{\infty} x^{-\theta} C(x,y)g(x, s)|H(y,s)|dydxds\nonumber\\
&+ \eta(\theta)  \int_{0}^{t}\int_{0}^{\infty}\int_{0}^{\infty} x^{-\theta} C(x,y)h(y, s)|H(x, s)|dydxds=: \sum_{i=1}^{5} S_{i}(t),
\end{align}
where $S_{i}(t)$, for $i=1,2,\cdots 5,$ are the corresponding integrals in (\ref{sum}). Each $S_{i}(t)$ is evaluated individually as follows.

 By using the estimate $A(x, y, s) H(y, s) \leq 2 (x+x^{-\theta}) |H(y, s)|$ and $(A1^{'})$, we deduce the estimate for $S_1(t)$ as
\begin{align}\label{S1}
S_1(t) 
 \leq & k_1 \int_{0}^{t} \bigg[ \int_{0}^{1} (x+x^{-\theta}) \frac{(1+x)^{\mu}}{x^{\sigma}}g(x,s)dx
 +\int_{1}^{\infty} (x+x^{-\theta}) \frac{(1+x)^{\mu}}{ x^{\sigma}}g(x,s)dx \bigg] \nonumber\\
 & \times  \bigg[\int_{0}^{1} \frac{(1+y)^{\mu}}{y^{\sigma}}|H(y,s)| dy+\int_{1}^{\infty} \frac{(1+y)^{\mu}}{y^{\sigma}}|H(y,s)| dy \bigg]ds \nonumber\\
\leq & 4 k_1 2^{\mu} \int_{0}^{t} \bigg[ \int_{0}^{1} x^{-\theta-\sigma} g(x,s)dx +\int_{1}^{\infty} xg(x,s)dx \bigg] Q(s)ds \leq  4k_1 2^{\mu}\|g\|_{\sigma_1, \sigma_2} \int_{0}^{t}  Q(s)ds.
\end{align}

 Similarly, by using the estimate $A(x, y, s) H(x, s) \leq 2 (y+y^{-\theta}) |H(x, s)|$ and $(A1^{'})$, $S_2(t)$ can be evaluated as
\begin{align}\label{S2}
S_2(t) 
 \leq & k_1 \int_{0}^{t} \bigg[ \int_{0}^{1} (y+y^{-\theta}) \frac{(1+y)^{\mu}}{y^{\sigma}} h(y,s)dy
 +\int_{1}^{\infty} (y+y^{-\theta}) \frac{(1+y)^{\mu}}{ y^{\sigma}} h(y, s)dy \bigg] \nonumber\\
 & \times  \bigg[\int_{0}^{1} \frac{(1+x)^{\mu}}{x^{\sigma}}|H(x,s)| dx+\int_{1}^{\infty} \frac{(1+x)^{\mu}}{x^{\sigma}}|H(x,s)| dx \bigg]ds \nonumber\\
\leq & 4k_1 2^{\mu}\int_{0}^{t} \bigg[ \int_{0}^{1} y^{-\theta} h(y, s)dy +\int_{1}^{\infty} y h(y, s)dy \bigg] Q(s)ds \nonumber\\
\leq & 4 k_1 2^{\mu} \|h\|_{\sigma_1, \sigma_2} \int_{0}^{t}  Q(s)ds.
\end{align}

Further, we estimate $S_3(t)$, by using $(A2^{'})$, as
\begin{align}\label{S3}
S_3(t) 
\leq &  2 k_2 \int_{0}^{t} \bigg [ \int_{0}^{1}x (1+x)^{\alpha} g(x,s)dx +  \int_{1}^{\infty}x (1+x)^{\alpha} g(x,s)dx  \bigg]\nonumber\\
& \times \bigg [ \int_{0}^{1} (1+y)^{\alpha}  g(x,s)|H(y,s)|dy +\int_{1}^{\infty} (1+y)^{\alpha}  g(x,s)|H(y,s)|dy \bigg] ds\nonumber\\
\leq &  8 k_2 \int_{0}^{t} \bigg [ \int_{0}^{1} g(x,s)dx +  \int_{1}^{\infty}  x^{1+\alpha} g(x,s)dx  \bigg]\nonumber\\
& \times \bigg [ \int_{0}^{1} y^{-\theta} g(x,s)|H(y,s)|dy +\int_{1}^{\infty} y g(x,s)|H(y,s)|dy \bigg] ds\nonumber\\
 \leq  & 8 k_2 \|g\|_{\sigma_1, \sigma_2} \int_{0}^{t} Q(s)  ds.
 \end{align}

 $S_4^n(t)$ can be evaluated, by using $(A2^{'})$, as
\begin{align}\label{S4}
S_4(t)
\leq & k_2 [\eta(\theta)+1]  \int_{0}^{t} \bigg[ \int_{0}^{1}x^{-\theta} (1+x)^{\alpha} g(x,s)dx +
\int_1^{\infty}x^{-\theta}(1+x)^{\alpha} g(x,s)dx\bigg]\nonumber\\
& \times \bigg[ \int_{0}^{1} (1+y)^{\alpha} |H(y,s)|dy+ \int_{1}^{\infty} (1+y)^{\alpha} |H(y,s)|dy\bigg] ds\nonumber\\
\leq & 4k_2 [\eta(\theta)+1]  \int_{0}^{t} \bigg[  \int_{0}^{1}x^{-\theta}  g(x,s)dx +
 \int_1^{\infty}  x^{\alpha} g(x,s)dx\bigg]\nonumber\\
 \times & \bigg[ \int_{0}^{1} y^{-\theta} |H(y,s)|dy+ \int_{1}^{\infty} y |H(y,s)|dy \bigg] ds \leq   4k_2 [\eta(\theta)+1] \|g\|_{\sigma_1, \sigma_2} \int_0^t Q(s) ds.
\end{align}

Finally, we estimate the last term $S_5^n(t)$ as
\begin{align}\label{S5}
S_5(t)
 \leq & \eta(\theta) k_2  \int_{0}^{t} \bigg[ \int_{0}^{1}  x^{-\theta} (1+x)^{\alpha} |H(x, s)|dx +
 \int_{1}^{\infty}  x^{-\theta} (1+x)^{\alpha} |H(x, s)|dx \bigg] \nonumber\\
& \times \bigg[ \int_{0}^{1} (1+y)^{\alpha} h(y, s)dy+ \int_{1}^{\infty} (1+y) ^{\alpha} h(y, s)dy \bigg] ds\nonumber\\
\leq & 4\eta(\theta) k_2  \int_{0}^{t} \bigg[ \int_{0}^{1}  x^{-\theta} |H(x, s)|dx +
\int_{1}^{\infty}  x|H(x, s)|dx \bigg] \nonumber\\
 \times & \bigg[ \int_{0}^{1}  y^{-\theta}h(y, s)dy+ \int_{1}^{\infty}  yh(y, s)dy \bigg] ds \leq  4\eta(\theta) k_2 \|h\|_{\sigma_1, \sigma_2} \int_{0}^{t} Q(s) ds.
\end{align}

Inserting the estimates (\ref{S1}), (\ref{S2}), (\ref{S3}), (\ref{S4}) and (\ref{S5}) into (\ref{sum}), we obtain
\begin{align*}
Q(t) \leq  \Psi \int_{0}^{t} Q(s) ds,
\end{align*}
where $\Psi :=4 [ 2^{\mu}k_1 \|g\|_{\sigma_1, \sigma_2}+  2^{\mu}k_1 \|h\|_{\sigma_1, \sigma_2} +2 k_2 \|g\|_{\sigma_1, \sigma_2} + k_2 [\eta(\theta)+1] \|g\|_{\sigma_1, \sigma_2} + k_2\eta(\theta)  \|h\|_{\sigma_1, \sigma_2} ]$.\\
Then by Gronwall's inequality, we have
\begin{align*}
Q(t) \leq  0 \times \exp[\Psi4T] = 0.
\end{align*}
Therefore, $g(x, t)=h(x, t)$ a.e.. This conforms the uniqueness of solutions to (\ref{ccfe})--(\ref{in1}).
\end{proof}

\section*{Acknowledgments}
$~~$ The authors would like thank University Grant Commission (UGC), $ 6405/11/44$, India, for assisting Ph.D fellowship to PKB and Indian Institute of Technology Roorkee, India for their funding support by Faculty Initiation Grant (FIG: MTD/FIG/100680) to AKG for completing this work.

\bibliographystyle{plain}


\end{document}